\newcommand{\lra}{\longrightarrow}
\newcommand{\RR}{\mathbb{R}}
\newcommand{\vep}{\varepsilon}
\newcommand*{\defeq}{\mathrel{\rlap{%
                     \raisebox{0.3ex}{$\m@th\cdot$}}%
                     \raisebox{-0.3ex}{$\m@th\cdot$}}%
                     =}
\newtheorem{thm}{Theorem}
\newtheorem{lemma}{Lemma}
\newtheorem{cor}{Corollary}
\newtheorem{defn}{Definition}
\newtheorem{prop}{Proposition}
\newtheorem*{definition-non}{Definition}
\newtheorem*{theorem-non}{Theorem}
\newtheorem*{proposition-non}{Proposition}
\newtheorem*{lemma-non}{Lemma}
\newtheorem*{corollary-non}{Corollary}
\newcommand{\beqa}{\begin{eqnarray}}
\newcommand{\beq}{\begin{equation}}
\newcommand{\eeqa}{\end{eqnarray}}
\newcommand{\eeq}{\end{equation}}
\newcommand\ip[2]{g({#1},{#2})} 
\newcommand\ipt[2]{\tilde{g}({#1},{#2})} 
\newcommand\ii{i}
\newcommand\imp{\hspace{.2in}\Rightarrow\hspace{.2in}}
\newcommand\mb{\overline{\boldsymbol m}}
\newcommand\mm{{\boldsymbol m}}
\newcommand\kk{T}
\newcommand\xx{X}
\newcommand\yy{Y}
\newcommand\kkt{\widetilde{T}}
\newcommand\xxt{\widetilde{X}}
\newcommand\yyt{\widetilde{Y}}
\newcommand\cd[2]{\nabla_{\!#1}{#2}}
\newcommand\cdt[2]{\widetilde{\nabla}_{\!#1}{#2}}
\newcommand\gL{g_{\scriptscriptstyle L}}
\newcommand\gR{g_{\scriptscriptstyle R}}
\newcommand\comma{\hspace{.2in},\hspace{.2in}}
\newcommand\commas{\hspace{.1in},\hspace{.1in}}
\providecommand{\customgenericname}{}
\newcommand{\newcustomtheorem}[2]{%
  \newenvironment{#1}[1]
  {%
   \renewcommand\customgenericname{#2}%
   \renewcommand\theinnercustomgeneric{##1}%
   \innercustomgeneric
  }
  {\endinnercustomgeneric}
}
\begin{document}
\title[]{Killing vector fields on Riemannian and Lorentzian 3-manifolds}
\author[]{Amir Babak Aazami and Robert Ream}
\address{Clark University\hfill\break\indent
Worcester, MA 01610}
\email{aaazami@clarku.edu, rream@clarku.edu}

\begin{abstract}
We give a complete local classification of all Riemannian 3-manifolds $(M,g)$ admitting a nonvanishing Killing vector field $T$.  We then extend this classification to timelike Killing vector fields on Lorentzian 3-manifolds, which are automatically nonvanishing.  The two key ingredients needed in our classification are the scalar curvature $S$ of $g$ and the function $\text{Ric}(T,T)$, where $\text{Ric}$ is the Ricci tensor; in fact their sum appears as the Gaussian curvature of the quotient metric obtained from the action of $T$.  Our classification generalizes that of Sasakian structures, which is the special case when $\text{Ric}(T,T) = 2$.  We also give necessary, and separately, sufficient conditions, both expressed in terms of $\text{Ric}(T,T)$, for $g$ to be locally conformally flat.  We then move from the local to the global setting, and prove two results: in the event that $T$ has unit length and the coordinates derived in our classification are globally defined on $\RR^3$, we give conditions under which $S$ completely determines when the metric will be geodesically complete.  In the event that the 3-manifold $M$ is compact, we give a condition stating when it admits a metric of constant positive sectional curvature.
\end{abstract}

\maketitle

\section{Introduction}
The aim of this paper is to give a complete local classification of all Riemannian 3-manifolds $(M,g)$ that admit a nonvanishing Killing vector field $T$.  In fact this classification will also yield a  related one: that of all \emph{Lorentzian} 3-manifolds supporting a \emph{timelike} Killing vector field.  Our classification proceeds by considering the special case when $T$ has constant length: the general case follows from this one by applying a conformal change by the factor of $g(T,T$).  But in fact there are important reasons for imposing this condition. One of them is that constant length allows us to adapt the machinery of the \emph{Newman-Penrose formalism} \cite{newpen62}\,---\,a construct that originated in 4-dimensional Lorentzian geometry\,---\,to the setting of 3-dimensional Riemannian geometry; see also \cite{schmidt2014,NC,bettiol2018}, wherein similar frame techniques have been applied in dimension 3.

As shown in Section \ref{sec:NP} below, constant length is a prerequisite for this formalism.  But more importantly, our interest in constant length arises from what we regard as the ``canonical" constant length Killing vector field in dimension 3: the unit length Killing vector field $T$ on $(\mathbb{S}^3,\mathring{g})$ tangent to the Hopf fibration, where $\mathring{g}$ is the standard (round) metric.  Given the special geometry of $(\mathbb{S}^3,\mathring{g})$  as a spherical space form, and the presence of such a vector field on it, we take our motivation from the following questions:
\begin{enumerate}[leftmargin=*]
\item[1.] Can one classify locally all Riemannian 3-manifolds admitting a constant length Killing vector field? Do they take on a ``canonical" form?
\item[2.] If a Riemannian 3-manifold admits a constant length Killing vector field, then when will it  be locally conformally flat, as with $(\mathbb{S}^3,\mathring{g})$?
\end{enumerate}
(Yet another path of inquiry, which we do not pursue here, would be to examine when the circle action provided by a constant length Killing vector field is free, and the role that sectional curvature plays in this; see \cite{BN}.) A complete answer to our first question above is provided in our first Theorem:

\begin{thm}
\label{thm:main}
Let $(M,g)$ be a Riemannian 3-manifold that admits a unit length Killing vector field $T$.  Then there exist local coordinates $(t,r,\theta)$ and a smooth function $\varphi(r,\theta)$ such that
\beqa
\label{eqn:gT0}
T = \partial_t \comma g = (T^{\flat})^2 + dr^2 + \varphi^2d\theta^2,
\eeqa
and where the quotient metric $dr^2 + \varphi^2d\theta^2$ has Gaussian curvature
\beqa
\label{eqn:gauss0}
-\frac{\varphi_{rr}}{\varphi} = \frac{1}{2}\big(S + \emph{\text{Ric}}(T,T)\big),
\eeqa
with $S$ and $\emph{\text{Ric}}$ the scalar curvature and Ricci tensor of $g$, respectively.  If \eqref{eqn:gT0} is given globally on $M = \RR^3$ \emph{(}with $r,\theta$ polar coordinates on $\RR^2$\emph{)}, and if $\emph{\text{Ric}}(T,T) =  0, \varphi(0,\theta) = 0$, $\varphi_r(0,\theta)=1$, $\varphi(-r,\theta+\pi) = -\varphi(r,\theta)$, and $\varphi > 0$ when $r > 0$, then $g$ is complete if and only if
$$
\lim_{r\to \infty} \inf_{|p| \geq r} S\big|_p\leq 0.
$$
\end{thm}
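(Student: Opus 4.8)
The driving idea is that \eqref{eqn:gauss0} is a Jacobi equation, so the sign of $S+\mathrm{Ric}(T,T)$ at infinity controls an ODE comparison. First I would use that $T$ is Killing: $g(\gamma',T)$ is constant, say $=c$, along every geodesic $\gamma$, with $|c|\le 1$ when $\gamma$ is unit-speed. Writing $\gamma(s)=(t(s),r(s),\theta(s))$ and substituting the normal form \eqref{eqn:gT0}, the identities $g(\gamma',T)=c$ and $|\gamma'|=1$ combine to give $\dot r^{\,2}+\varphi^2\dot\theta^{\,2}=1-c^2$. Two things follow at once: $|\dot r|\le 1$, so the $r$-coordinate of a geodesic is $1$-Lipschitz in $s$ and cannot escape to infinity in finite parameter time — in particular the curves $\{\theta=\mathrm{const}\}$ are already complete unit-speed geodesics, so any incompleteness must come from $\theta$ (and then $t$) running off — and the planar curve $\bar\gamma(s)=(r(s),\theta(s))$ has constant speed $\sqrt{1-c^2}$ for the quotient metric $h\defeq dr^2+\varphi^2 d\theta^2$. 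Since the integral curves of $T$ are complete geodesics and, on any finite parameter interval, $\bar\gamma$ has finite $h$-length, a Hopf--Rinow argument reduces the completeness of $g$ to that of $(\RR^2,h)$: if $h$ is complete then $\bar\gamma$ stays in a compact set, $\varphi$ is bounded below there, the equation for $\dot t$ keeps $\gamma$ in a compact subset of $\RR^3$, and $\gamma$ extends; conversely, horizontal lifts of $h$-geodesics are $g$-geodesics.

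It then remains to read off completeness of $h=dr^2+\varphi^2 d\theta^2$ from \eqref{eqn:gauss0}. The key observation is that, for each fixed $\theta$, the positive function $r\mapsto\varphi(r,\theta)$ is a Jacobi field along the corresponding $r$-line: it satisfies $\varphi_{rr}+\tfrac12\big(S+\mathrm{Ric}(T,T)\big)\varphi=0$, which is precisely \eqref{eqn:gauss0}. Now apply Sturm comparison. If the displayed condition fails, i.e.\ $S+\mathrm{Ric}(T,T)\ge 2\varepsilon>0$ outside a compact set, then comparison with $u_{rr}+\varepsilon u=0$ forces every positive solution $\varphi(\cdot,\theta)$ to vanish within an $r$-interval of length $\pi/\sqrt{\varepsilon}$; the vanishing of $\varphi$ is exactly the $\theta$-lines collapsing to zero length, which I would convert into a geodesic of $g$ leaving every compact set in finite time, i.e.\ incompleteness. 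For the converse, assuming $\liminf\big(S+\mathrm{Ric}(T,T)\big)\le 0$ in the stated sense, the task is to prove completeness of $(\RR^2,h)$ outright; the hypothesis enters through a comparison/exhaustion argument which keeps the Jacobi fields $\varphi(\cdot,\theta)$ from degenerating and thereby forces closed bounded subsets of $(\RR^2,h)$ to be compact, so that $h$, and hence $g$, is complete.

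The step I expect to be the main obstacle is this last implication, completeness from $\liminf\big(S+\mathrm{Ric}(T,T)\big)\le 0$. Two difficulties compound. First, the hypothesis is only a $\liminf$, not a uniform lower bound, so the comparison cannot be run once and for all; one must set it up along a carefully chosen exhaustion on which one repeatedly harvests radii of nonpositive curvature. Second, \eqref{eqn:gauss0} controls only $\varphi_{rr}$, so to rule out that closed bounded sets fail to be compact because of degeneration in the $\theta$-direction one must feed in the further structure equations from Section~\ref{sec:NP} (which relate the $\theta$-behavior of $\varphi$ and of $T^\flat$ to the curvature of $g$). Once metric completeness of the quotient surface $(\RR^2,h)$ is in hand, the reduction of the first paragraph closes the argument.
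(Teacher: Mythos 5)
Your proposal proves, at most, the final clause of the theorem and assumes the rest. The existence of coordinates $(t,r,\theta)$ with $T=\partial_t$ and $g=(T^{\flat})^2+dr^2+\varphi^2d\theta^2$, and the identity $-\varphi_{rr}/\varphi=\tfrac{1}{2}\big(S+\mathrm{Ric}(T,T)\big)$, are the principal content of Theorem \ref{thm:main}, and you use both as inputs (``substituting the normal form \eqref{eqn:gT0}'', ``$\varphi$ \dots satisfies \dots which is precisely \eqref{eqn:gauss0}'') without any argument for them. In the paper these occupy Sections 2--5: the Killing condition forces $\kappa=\sigma=\rho+\bar{\rho}=0$ in the Newman--Penrose frame; the gauge Proposition rotates the frame so that $\vep=\rho$, $\mathrm{Re}\,\beta=0$, $T(\beta)=0$; the resulting brackets $[T,X]=[T,Y]=0$ and $[X,Y]=\omega T+(\mathrm{div}\,Y)X$ give $dY^{\flat}=0$ and $dX^{\flat}=(\mathrm{div}\,Y)\,Y^{\flat}\wedge X^{\flat}$, hence $Y^{\flat}=dr$, $X^{\flat}=\varphi\,d\theta$ and the normal form; and \eqref{eqn:gauss0} follows by differentiating $\varphi_r=(\mathrm{div}\,Y)\varphi$ and inserting \eqref{eqn:Ric2} together with $\mathrm{Ric}(T,T)=\omega^2/2$ from Lemma \ref{lemma:KVF2}. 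Nothing in your write-up substitutes for this, so the classification part of the theorem is simply missing.

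On the completeness clause, your reduction to the quotient metric $h=dr^2+\varphi^2d\theta^2$ via the conserved quantity $g(T,\gamma')$ is essentially the paper's. But the paper then invokes the two-dimensional criterion of \cite{kw2}, whereas you set out to prove it and do not: the implication $\liminf\le 0\Rightarrow$ completeness is exactly the direction you label ``the main obstacle,'' and for it you offer only the phrase ``comparison/exhaustion argument,'' so the substantive step is absent. Moreover, the half you do sketch is aimed at the wrong conclusion: when \eqref{eqn:gT0} holds globally on $\RR^3$, $\varphi$ is everywhere positive, so if $S+\mathrm{Ric}(T,T)\ge 2\varepsilon>0$ off a compact set, then along any line $\theta=\theta_0$ with $|\theta_0|$ large one has $\varphi_{rr}\le-\varepsilon\varphi$ for all $r\in\RR$, and Sturm comparison forces a zero of $\varphi$ --- a contradiction with positivity, not a construction of an incomplete geodesic; there is no positive $\varphi$ left to ``collapse.'' So that direction must be reorganized, and the direction carrying the real content still needs either a proof or the citation the paper uses.
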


The following remarks help to shed light on this result:
\begin{enumerate}[leftmargin=*]
\item[i.] After our first preprint appeared, we learned of the works \cite{manzano14,manzano17}, in which the existence of coordinates isometric to \eqref{eqn:gT0} are proved, as well as a result that includes \eqref{eqn:gauss0} as a special case; these were obtained via a different method than ours, and applied to the classification of Riemannian submersions from 3-manifolds to a surface, whose fibers are the integral curves of a Killing vector field.
\item[ii.] An almost identical Theorem exists for unit \emph{timelike} Killing vector fields on \emph{Lorentzian} 3-manifolds; see Corollary \ref{thm:main2} in Section \ref{sec:Lor} below, wherein the relevant Lorentzian terminology is also defined.
\item[iii.] If $T$ does not have unit length, then \eqref{eqn:gT0} is scaled by $g(T,T)$.  Also, in the global case $M = \RR^3$, completeness can be obtained by conditions other than $\text{Ric}(T,T) = 0$, using, e.g., \cite[Theorem~6]{MaschlerReam}.
\item[iv.] The ``canonical form" alluded to above is manifested in \eqref{eqn:gT0} and \eqref{eqn:gauss0}; for the form of the metric in the coordinate basis $\{\partial_t,\partial_r,\partial_\theta\}$, see \eqref{eqn:gtr0} in Section \ref{sec:proof} below.  As Theorem \ref{thm:main} makes clear, our classification depends entirely on two functions, the scalar curvature $S$ and $\text{Ric}(T,T)$.  Let us say more about the latter function, which is especially important; one way to appreciate its significance when $\text{dim}\,M = 3$ is as follows.  If a vector field $T$ has constant length and geodesic flow (as does any constant length Killing vector field), then the function $\text{Ric}(T,T)$, if nonnegative, completely governs whether its orthogonal complement $T^{\perp} \subseteq TM$ is integrable.  As a consequence, it was shown in \cite{hp13} that when such a $T$ satisfies $\text{Ric}(T,T) > 0$ and when $M$ is orientable and compact, then $T^{\flat}$ is a contact form and $T$ is its Reeb vector field; if in addition $T$ is divergence-free and $\text{Ric}(T,T) = 1$, then $T_*(T^{\perp})$ is $J$-invariant, where $J$ is the Levi-Civita almost-complex structure on $TTM$ (in fact  these two conditions are necessary and sufficient).
\item[v.] The previous remark did not assume that  $T$ is a unit length Killing vector field.  Imposing this condition\,---\,as well as the condition $\text{Ric}(T,T) = 2$, so that the endomorphism in \eqref{eqn:DT} below defines an almost complex structure on $T^{\perp}$\,---\,would make $(M,g,T)$ a \emph{Sasakian structure}. In dimension 3, a classification of these on closed manifolds was obtained in \cite{geiges97}, up to diffeomorphism; an  explicit metric classification was then given in \cite{belgun01,belgun03}, which also established a one-to-one correspondence between Sasakian and \emph{normal CR structures}, and also classified the latter.  For an application to monopole fields, see \cite{harris}.
\end{enumerate}

Given the importance of the quantities $S$ and $\text{Ric}(T,T)$, it is worthwhile to mark the following relationship between unit length Killing vector fields and constant curvature in the compact setting, a result which is essentially a corollary of a well known result in \cite{Hamilton}:
\begin{thm}
\label{thm:2}
Let $(M,g)$ be a compact Riemannian 3-manifold and $T$ a globally defined, unit length Killing vector field.  If $\emph{\text{Ric}}(T,T) \neq 0$ and
\beqa
\label{eqn:pos}
S > 2\frac{|\emph{\text{Ric}}(T)|_g^2}{\emph{\text{Ric}}(T,T)}-\emph{\text{Ric}}(T,T),
\eeqa
where $\emph{\text{Ric}}(T)$ is the Ricci operator, then $M$ admits a metric of constant positive sectional curvature.
\end{thm}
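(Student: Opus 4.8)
The plan is to invoke Hamilton's theorem from \cite{Hamilton}: a compact Riemannian 3-manifold whose Ricci curvature is everywhere positive-definite admits a metric of constant positive sectional curvature. So the entire task reduces to showing that the hypothesis \eqref{eqn:pos} forces $\text{Ric} > 0$ as a quadratic form at every point.

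First I would set up a pointwise adapted frame. Since $T$ has unit length, at each point $p$ choose an orthonormal basis $\{T, e_2, e_3\}$ of $T_pM$, and note that because $T$ is a unit Killing field its flow is geodesic and the Newman--Penrose-type machinery of Section \ref{sec:NP} (or direct computation) constrains the Ricci tensor: in particular $\text{Ric}(T, \cdot)$ is determined by the geometry of the $T$-action, and the scalar curvature decomposes as $S = \text{Ric}(T,T) + \text{Ric}(e_2,e_2) + \text{Ric}(e_3,e_3)$. The key algebraic step is then to bound the smallest eigenvalue of $\text{Ric}$ from below. Writing $a = \text{Ric}(T,T)$, the off-diagonal vector $\text{Ric}(T)$ has squared norm $|\text{Ric}(T)|_g^2 = a^2 + \text{Ric}(T,e_2)^2 + \text{Ric}(T,e_3)^2$, so the ``mixed'' part has norm-squared $|\text{Ric}(T)|_g^2 - a^2$. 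A vector $v = \alpha T + \beta e$ (with $e$ a unit vector in $T^\perp$ in the direction of the mixed component) satisfies
\[
\text{Ric}(v,v) \;\geq\; \alpha^2 a \;-\; 2|\alpha\beta|\sqrt{|\text{Ric}(T)|_g^2 - a^2} \;+\; \beta^2 \lambda_\perp,
\]
where $\lambda_\perp$ is the smallest eigenvalue of $\text{Ric}$ restricted to $T^\perp$. This quadratic form in $(\alpha,\beta)$ is positive-definite precisely when $a > 0$ and $a\,\lambda_\perp > |\text{Ric}(T)|_g^2 - a^2$, i.e. $\lambda_\perp > \big(|\text{Ric}(T)|_g^2 - a^2\big)/a$. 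Since $\lambda_\perp \geq S - a - \lambda_\perp'$ with $\lambda_\perp'$ the other eigenvalue on $T^\perp$, and in the worst case the two eigenvalues on $T^\perp$ coincide, one gets $2\lambda_\perp \geq S - a$ in the extremal configuration, so it suffices to have $S - a > 2\big(|\text{Ric}(T)|_g^2 - a^2\big)/a$, which rearranges exactly to \eqref{eqn:pos}. Making this eigenvalue bookkeeping rigorous\,---\,tracking which pairing of eigenvalues in $T^\perp$ is genuinely the worst case, and checking that the hypothesis $a = \text{Ric}(T,T) > 0$ is implied (it is, since the right side of \eqref{eqn:pos} together with positivity of $|\text{Ric}(T)|_g^2 \geq a^2$ forces $S + a > 0$ and $a > 0$)\,---\,is where the real care is needed.

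The main obstacle I anticipate is precisely this last point: \eqref{eqn:pos} only controls the scalar curvature and two scalar invariants of $\text{Ric}(T)$, not the full spectrum of $\text{Ric}$ on $T^\perp$. I would handle it by using the constant-length Killing condition to pin down $\text{Ric}$ more explicitly in the adapted frame\,---\,the formalism of Section \ref{sec:NP} expresses the curvature components in terms of $\varphi$ and the connection coefficients, and in particular shows the $2\times 2$ block of $\text{Ric}$ on $T^\perp$ is governed by the quotient Gaussian curvature $-\varphi_{rr}/\varphi$ together with the squared norm of the ``twist'' of $T$. Feeding those explicit expressions into the eigenvalue estimate above should close the gap and reduce the positivity of $\text{Ric}$ to exactly the stated inequality, after which Hamilton's theorem finishes the proof.
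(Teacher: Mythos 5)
Your overall strategy---reduce the hypothesis \eqref{eqn:pos} to pointwise positivity of the Ricci operator and then invoke Hamilton's theorem---is exactly the paper's route, but as written there is a genuine gap, and it sits precisely where you flag your ``main obstacle.'' From the trace identity $\lambda_\perp+\lambda_\perp'=S-\text{Ric}(T,T)$ alone one only gets $2\lambda_\perp\le S-\text{Ric}(T,T)$, not $\ge$: taking the two eigenvalues of $\text{Ric}|_{T^\perp}$ to coincide is not a harmless ``worst case,'' it is the most favorable configuration, and it is exactly the statement that has to be proved. So your chain of estimates does not deliver \eqref{eqn:pos} $\Rightarrow \text{Ric}>0$ until you actually establish the special form of the Ricci tensor forced by the unit-length Killing condition, which in the proposal you only gesture at (``feeding those explicit expressions \ldots should close the gap'') rather than carry out.

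The missing input is supplied by Lemma \ref{lemma:KVF2} together with \eqref{eqn:S3}: for a unit Killing field with twist $\omega$ one has $\text{Ric}(T,T)=\frac{\omega^2}{2}$ and $\text{Ric}(\mm,\mm)=0$, i.e.\ $\text{Ric}(X,X)=\text{Ric}(Y,Y)=\frac{S}{2}-\frac{\omega^2}{4}$ and $\text{Ric}(X,Y)=0$, so $\text{Ric}|_{T^\perp}$ is a multiple of the identity and the eigenvalue coincidence you assumed is automatic; moreover $\text{Ric}(T,X)=-\frac{1}{2}Y(\omega)$ and $\text{Ric}(T,Y)=\frac{1}{2}X(\omega)$, whence $|\text{Ric}(T)|_g^2=\frac{1}{4}\big(\omega^4+|\nabla\omega|_g^2\big)$. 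This is the matrix \eqref{eqn:Ham1} of the paper, whose eigenvalues \eqref{eqn:eigenval} are all positive precisely when $S>\frac{|\nabla\omega|_g^2}{\omega^2}+\frac{\omega^2}{2}$, which is literally \eqref{eqn:pos} rewritten (this is \eqref{eqn:Sprelim}); equivalently, in your notation the criterion $a>0$, $a\lambda_\perp>|\text{Ric}(T)|_g^2-a^2$ with $\lambda_\perp=\frac{S}{2}-\frac{\omega^2}{4}$ closes at once. With those curvature identities proved (they follow from \eqref{eqn:S1}--\eqref{eqn:S3} after setting $\kappa=\sigma=\rho+\bar{\rho}=0$), the remainder of your argument coincides with the paper's; without them, the decisive computation is asserted rather than performed, so the proof is incomplete.
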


Finally, our answer to the second question is also given in terms of $\text{Ric}(T,T)$:

\begin{thm}
\label{thm:3}
Let $(M,g)$ be a Riemannian 3-manifold that admits a unit length Killing vector field $T$.  If $g$ is locally conformally flat, then
\beqa
\label{eqn:wPDE}
4|\emph{\text{Ric}}(T)|_g^2 = 3\emph{\text{Ric}}(T,T)^2 - 2B\emph{\text{Ric}}(T,T) + C
\eeqa
for some constants $B,C$, where $\emph{\text{Ric}}(T)$ is the Ricci operator.
Conversely, given \eqref{eqn:wPDE}, there exist coordinates $(r,\theta)$ on the quotient metric in \eqref{eqn:gT0} with respect to which $g$ is conformally flat when 
\[ \omega_\theta=0 \comma \omega_r^2 + \frac{1}{4}(\omega^2 + 2B)^2= C+B^2 \comma \varphi=h(\theta)\omega_r, \]
where $\omega^2  = 2\emph{\text{Ric}}(T,T)$, $\varphi$ is as in Theorem \ref{thm:main}, and $h(\theta)$ is a smooth function.  If $\emph{\text{Ric}}(T,T)$ is constant, then  $g$ is locally conformally flat if and only if $S = 3\emph{\text{Ric}}(T,T)$.
\end{thm}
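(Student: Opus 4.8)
The plan is to use the fact that, since $\dim M = 3$, the Weyl tensor vanishes identically, so $g$ is locally conformally flat if and only if its Cotton tensor
\[
  \mathcal{C}_{ijk} \;=\; \nabla_k\!\left(\text{Ric}_{ij} - \tfrac14 S\, g_{ij}\right) - \nabla_j\!\left(\text{Ric}_{ik} - \tfrac14 S\, g_{ik}\right)
\]
vanishes; the whole proof then amounts to evaluating $\mathcal{C}_{ijk}$ in a well chosen frame. I would work in the Newman--Penrose coframe adapted to $T$ from Section~\ref{sec:NP} together with the canonical coordinates of Theorem~\ref{thm:main}, in which the metric is \eqref{eqn:gtr0} and the twist of $T$ satisfies $\omega^2 = 2\,\text{Ric}(T,T)$. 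The structure equations of this coframe already express $S$ and $\text{Ric}(T,T)$ through $\varphi$ and $\omega$ (this is exactly how \eqref{eqn:gauss0} arises in Section~\ref{sec:proof}), and they likewise express the remaining components of $\text{Ric}$\,---\,hence $|\text{Ric}(T)|_g^2$\,---\,through $\varphi$, $\omega$ and their first derivatives. Substituting all of this into $\mathcal{C}_{ijk}$ converts ``$g$ is conformally flat'' into a closed first-order system relating $\omega$, $\varphi$ and $S$.

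For the forward implication I would split the vanishing of $\mathcal{C}_{ijk}$ into two parts. The components of $\mathcal{C}$ that depend algebraically on $dS$ and $d\omega$ should force $\nabla\,\text{Ric}(T,T)$ to be aligned with a single quotient coordinate direction, so that after relabelling $\theta$ one has $\omega_\theta = 0$, and they should pin down the off-diagonal part of the Ricci operator, yielding an explicit formula for $|\text{Ric}(T)|_g^2$ in terms of $\omega$ and $\omega_r$. The one remaining component of $\mathcal{C}$ is then a second-order ODE for $\omega(r)$ that integrates once to a first integral of the form $\omega_r^2 + \tfrac14(\omega^2+2B)^2 = C+B^2$ with integration constants $B,C$. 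Eliminating $\omega_r$ between this first integral and the formula for $|\text{Ric}(T)|_g^2$ and substituting $\omega^2 = 2\,\text{Ric}(T,T)$ should produce exactly \eqref{eqn:wPDE}. I expect the main obstacle to be bookkeeping rather than anything conceptual: $\mathcal{C}_{ijk}$ is third order in $g$, so one must organize its independent components carefully, identify which ones carry the algebraic relation and which the ODE, and verify that the ODE is genuinely exact along the gradient flow of $\text{Ric}(T,T)$ so that the first integral is available.

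For the converse one runs the same computation backwards. Given \eqref{eqn:wPDE}, that relation determines $|\text{Ric}(T)|_g^2$ as a function of $\text{Ric}(T,T)$ alone, which constrains the level sets of $\text{Ric}(T,T)$ and allows the coordinates of Theorem~\ref{thm:main} to be chosen with $\omega_\theta = 0$ and $\omega_r^2 + \tfrac14(\omega^2+2B)^2 = C+B^2$; the only surviving Cotton equation is then satisfied by taking $\varphi = h(\theta)\,\omega_r$ with $h$ an arbitrary smooth function, and substituting back confirms $\mathcal{C}_{ijk}\equiv 0$, i.e.\ $g$ is conformally flat in these coordinates. Finally, when $\text{Ric}(T,T)$ is constant, $\omega$ is constant, the derivative terms $\omega_r,\omega_\theta$ drop out and the first-order system degenerates; the Cotton equations then collapse to a single scalar condition on the Gaussian curvature $-\varphi_{rr}/\varphi = \tfrac12\big(S+\text{Ric}(T,T)\big)$ of the quotient, which by \eqref{eqn:gauss0} together with the necessity computation is equivalent to $S = 3\,\text{Ric}(T,T)$, giving both directions of the last claim.
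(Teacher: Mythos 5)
Your overall strategy is the paper's: in dimension $3$ conformal flatness is the vanishing of the Cotton (equivalently Cotton--York) tensor, and the paper likewise evaluates it in the adapted frame $\{T,X,Y\}$ of \eqref{eqn:coord1} (see \eqref{eqn:Y1}--\eqref{eqn:Y3}), reading off the necessity of \eqref{eqn:wPDE}, the sufficiency conditions, and the constant-$\text{Ric}(T,T)$ criterion. The converse and the constant case in your sketch match the paper, including the recognition that \eqref{eqn:wPDE} alone is not sufficient and that $\varphi=h(\theta)\omega_r$ must be imposed.

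However, your forward direction has a genuine gap. You propose to first ``relabel $\theta$'' so that $\omega_\theta=0$ and then integrate a single ODE in $r$. The Cotton components do not force this: the relevant entry ($c_{32}=0$) only gives $\partial_r(\omega_\theta/\varphi)=0$, i.e.\ $\omega_\theta=A(\theta)\varphi$. The coordinate rotation that kills $\omega_\theta$ (taking $\widetilde Y\parallel\nabla\omega$ and $\widetilde X\perp\nabla\omega$) is only admissible if the rotated frame still satisfies the gauge \eqref{eqn:guage}, and the divergence-freeness of the normalized $\widetilde X$ requires $\widetilde X(|\nabla\omega|_g)=0$, i.e.\ that $|\nabla\omega|_g$ be a function of $\omega$ alone --- which is precisely the content of \eqref{eqn:wPDE}/\eqref{eqn:prelim1} that you are trying to prove; this is why the paper performs that rotation only in the converse. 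Relatedly, your claim that the second-order equation in $r$ ``integrates once to a first integral with constants $B,C$'' hides the actual crux: a priori the $r$-integration produces a function $f(\theta)$, and one must combine the remaining Cotton component ($c_{33}=0$, written with $\omega_\theta=A(\theta)\varphi$ and \eqref{eqn:cr}) to get $f'(\theta)=-8A(\theta)A'(\theta)$, hence $f=-4A^2+4C$ with $C$ a true constant; the $A^2$ term then recombines with $\omega_\theta^2/\varphi^2$ inside $|\nabla\omega|_g^2$ so that \eqref{eqn:wPDE} holds with genuine constants and without any special coordinate alignment. Without this step (or an independent proof that $|\nabla\omega|_g$ is constant on level sets of $\omega$), the necessity statement is not established.
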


As a check of the last statement, note that for $(\mathbb{S}^3,\mathring{g})$ with Hopf Killing vector field $T$ and radius $R$,
$$
\text{Ric}(T,T) = \frac{2}{R^2} \comma S = \frac{6}{R^2}\cdot
$$

\section{Divergence, Twist, Shear}
\label{sec:NP}
A Killing vector field $T$ on a Riemannian manifold $(M,g)$ is defined by the condition
$
\label{eqn:KVF}
\mathfrak{L}_T g = 0,
$
where $\mathfrak{L}$ is the Lie derivative.  However, when $T$ has unit length and when $\text{dim}\,M = 3$, there is an equivalent formulation, given by Lemma \ref{lemma:KVF} below, which plays a crucial role in our classification.  This formulation also involves the Lie derivative, but owing to the low dimension, only certain components of it, which components carry geometric properties of the flow of $T$.  These properties are the \emph{divergence}, \emph{twist}, and \emph{shear}; as the latter two are not as well known as the former, we now digress to define them explicitly.  Thus, let $\kk$ be a smooth unit length vector field defined in an open subset of a Riemannian 3-manifold $(M,g)$, so that $\cd{v}{\kk} \perp \kk$ for all vectors $v$ ($\nabla$ is the Levi-Civita connection).  Let $\xx$ and $\yy$ be two smooth vector fields such that $\{\kk,\xx,\yy\}$ is a local orthonormal frame.  Now define the following endomorphism $D$ of the orthogonal complement $\kk^{\perp} \subseteq TM$,
\beqa
\label{eqn:DT}
D\colon \kk^{\perp} \lra \kk^{\perp}\hspace{.2in},\hspace{.2in}v\ \mapsto\ \cd{v}{\kk},
\eeqa
and observe that its matrix with respect to the frame $\{\kk,\xx,\yy\}$ is
\beqa
\label{eqn:matrix2}
D = \begin{pmatrix}
        \ip{\cd{\xx}{\kk}}{\xx} & \ip{\cd{\yy}{\kk}}{\xx}\\
        \ip{\cd{\xx}{\kk}}{\yy} & \ip{\cd{\yy}{\kk}}{\yy}
    \end{pmatrix}\cdot\nonumber
\eeqa
Contained within this matrix are three geometric properties associated to the flow of $T$:
\begin{enumerate}[leftmargin=*]
\item[1.] The divergence of $\kk$, denoted $\text{div}\,\kk$, is simply the trace of $D$.
\item[2.] By Frobenius's theorem, $\kk^{\perp}$ is integrable if and only if the anti-symmetric part of $D$ vanishes; as seen in \eqref{eqn:matrix} below, this vanishing is governed by the following function, which comprises the off-diagonal elements of the anti-symmetric part of $D$:
\beqa
\label{eqn:rotation}
\omega \defeq \ip{\kk}{[\xx,\yy]} = \ip{\cd{\yy}{\kk}}{\xx} - \ip{\cd{\xx}{\kk}}{\yy}.
\eeqa
Since $\omega^2$ equals the determinant of the anti-symmetric part of $D$, it is a frame independent quantity.  We call $\omega^2$ the \emph{twist function} of $T$ and say that the flow of $\kk$ is \emph{twist-free} if $\omega^2 = 0$.
\item[3.] The third piece of information is the \emph{shear} $\sigma$ of $\kk$; it is given by the trace-free symmetric part of $D$, whose components $\sigma_1,\sigma_2$ we combine here into a complex-valued quantity:
\beqa
\hspace{.26in}\sigma\!\!&\defeq&\!\! \underbrace{\,\frac{1}{2}\Big(\ip{\cd{\yy}{\kk}}{\yy} - \ip{\cd{\xx}{\kk}}{\xx}\Big)\,}_{\sigma_1} +\ \ii\underbrace{\,\frac{1}{2}\Big(\ip{\cd{\yy}{\kk}}{\xx} + \ip{\cd{\xx}{\kk}}{\yy}\Big)\,}_{\sigma_2}.\nonumber\\
&&\label{eqn:shear}
\eeqa
Although $\sigma$ itself is not frame independent, its magnitude $|\sigma^2|$ is: by \eqref{eqn:matrix} below, it is minus the determinant of the trace-free symmetric part of $D$.  We say that the flow of $\kk$ is \emph{shear-free} if $\sigma = 0$.  As with being twist-free, being shear-free is a frame independent statement. In terms of $\text{div}\,\kk$, $\omega$, and $\sigma$, $D$ takes the form
\beqa
\label{eqn:matrix}
D = \begin{pmatrix}\frac{1}{2}\text{div}\,T &0\\0 & \frac{1}{2}\text{div}\,T\end{pmatrix}\ + 
\!\!\!\underbrace{\,\begin{pmatrix}
        - \sigma_1 & \sigma_2\\
        \sigma_2 & \sigma_1
    \end{pmatrix}\,}_{\text{trace-free symmetric}}\!\!\! +\, \underbrace{\,\begin{pmatrix}
        0 & \frac{\omega}{2}\\
        - \frac{\omega}{2} & 0
    \end{pmatrix}\,}_{\text{anti-symmetric}}\cdot
\eeqa
\end{enumerate}
We record here the well known fact that, while being divergence-free is not a conformal invariant, being shear-free or twist-free \emph{is}:

\begin{lemma}
Let $(M,g)$ be a Riemannian 3-manifold and $T$ a unit length vector field.  Given a conformal metric $\tilde{g} = e^{2f}g$, the vector field $\widetilde{T} = e^{-f}T$ is shear-free with respect to $\tilde{g}$ if and only if $T$ is shear-free with respect to $g$. Likewise if shear-free is replaced by twist-free.
\end{lemma}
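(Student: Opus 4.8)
The plan is to express the endomorphism $D$ of $\kk^\perp$ directly in terms of the Levi-Civita connection and then compare the two connections $\nabla$ and $\widetilde\nabla$ using the standard conformal transformation law. First I would fix a local orthonormal frame $\{\kk,\xx,\yy\}$ for $g$; then $\{\kkt,\xxt,\yyt\} = \{e^{-f}\kk, e^{-f}\xx, e^{-f}\yy\}$ is a local orthonormal frame for $\tilde g = e^{2f}g$, and $\kkt^\perp = \kk^\perp$ as a subbundle of $TM$. The key computational input is the formula
\beqa
\label{eqn:conftrans}
\cdt{v}{w} = \cd{v}{w} + (vf)w + (wf)v - g(v,w)\,\mathrm{grad}_g f,\nonumber
\eeqa
valid for all vector fields $v,w$. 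The goal is to show that the trace-free symmetric part (the shear $\sigma$) and the antisymmetric part (the twist $\omega$) of the matrix of $\widetilde D$ with respect to $\{\xxt,\yyt\}$ are, up to a positive scalar factor, equal to those of $D$ with respect to $\{\xx,\yy\}$.

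The main steps: (1) Compute $\widetilde D(\xxt) = \cdt{\xxt}{\kkt}$ and $\widetilde D(\yyt) = \cdt{\yyt}{\kkt}$ using \eqref{eqn:conftrans}, being careful with the $e^{-f}$ factors: since $\xxt = e^{-f}\xx$ and $\kkt = e^{-f}\kk$, one gets $\cdt{\xxt}{\kkt} = e^{-2f}\cdt{\xx}{\kk} + (\text{terms from differentiating the } e^{-f})$, and then $\cdt{\xx}{\kk} = \cd{\xx}{\kk} + (\xx f)\kk + (\kk f)\xx - g(\xx,\kk)\,\mathrm{grad}_g f = \cd{\xx}{\kk} + (\xx f)\kk + (\kk f)\xx$, using $g(\xx,\kk)=0$. (2) Project onto $\kkt^\perp = \mathrm{span}\{\xxt,\yyt\}$; the $(\xx f)\kk$-type terms lie along $\kk$ and drop out of the relevant matrix entries, while the $(\kk f)\xx$ and $(\kk f)\yy$ terms contribute only to the \emph{diagonal} of the matrix, hence only to the trace (divergence) part and not to $\sigma$ or $\omega$. (3) Collect the $\tilde g$-matrix entries $\ipt{\cdt{\xxt}{\kkt}}{\xxt}$, etc.; the overall factor $e^{-f}$ from $\kkt = e^{-f}\kk$ combined with $\ipt{\cdot}{\cdot} = e^{2f}\ip{\cdot}{\cdot}$ and the $e^{-f}$ in $\xxt,\yyt$ produces a single clean factor of $e^{-f}$ relating $\widetilde D$'s shear/twist entries to $D$'s, from which $\tilde\sigma = e^{-f}\sigma$ and $\tilde\omega = e^{-f}\omega$ follow. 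Since $e^{-f}$ never vanishes, $\tilde\sigma = 0 \iff \sigma = 0$ and likewise $\tilde\omega = 0 \iff \omega = 0$.

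Alternatively, and perhaps more cleanly, one can avoid the connection computation for the twist part entirely by using the bracket characterization \eqref{eqn:rotation}: $\tilde\omega = \ipt{\kkt}{[\xxt,\yyt]}$, and since $[e^{-f}\xx, e^{-f}\yy] = e^{-2f}[\xx,\yy] + e^{-f}(\xx(e^{-f}))\yy - e^{-f}(\yy(e^{-f}))\xx$, the last two terms lie in $\mathrm{span}\{\xx,\yy\} = \kk^\perp$ and so are $g$-orthogonal to $\kk$; hence $\ipt{\kkt}{[\xxt,\yyt]} = e^{2f}\ip{e^{-f}\kk}{\,e^{-2f}[\xx,\yy] + (\ldots)} = e^{-f}\ip{\kk}{[\xx,\yy]} = e^{-f}\omega$, giving twist-free invariance immediately.

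The main obstacle is bookkeeping, not conceptual: one must track several compounding powers of $e^{-f}$ (one from rescaling $\kk$, one each from rescaling $\xx$ and $\yy$, and $e^{2f}$ from the metric itself) and verify that the gradient and directional-derivative correction terms in \eqref{eqn:conftrans} land either along $\kk$ (and thus get projected away) or on the diagonal of the $2\times2$ matrix (and thus affect only the divergence), leaving the trace-free symmetric and antisymmetric parts scaled by exactly $e^{-f}$. It is precisely the \emph{trace-free} nature of $\sigma$ that makes the $\mathrm{div}$-contaminating terms harmless, which is why divergence-freeness is \emph{not} preserved while shear- and twist-freeness are.
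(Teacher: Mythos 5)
Your proposal is correct and follows essentially the same route as the paper: rescale the orthonormal frame by $e^{-f}$, apply the conformal transformation law for the Levi-Civita connection, check that the correction terms either point along $T$ or are a multiple of the identity on $T^{\perp}$ (hence only shift the divergence), and conclude $\tilde\sigma = e^{-f}\sigma$ and $\tilde\omega = e^{-f}\omega$. Your ``alternative'' bracket computation for the twist is in fact exactly how the paper handles that part.
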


\begin{proof}
Note that $\tilde{g}(\widetilde{T},\widetilde{T}) = 1$; given a $g$-orthonormal frame $\{T,X,Y\}$, form the $\tilde{g}$-orthonormal frame $\{\widetilde{T},\widetilde{X},\widetilde{Y}\}$, with $\widetilde{X} = e^{-f}X$ and $\widetilde{Y} = e^{-f}Y$.  Denoting by $\widetilde{\nabla}$ the Levi-Civita connection of $\tilde{g}$, the shear of $\widetilde{T}$ with respect to $\tilde{g}$ is
\beqa
\tilde{\sigma} \!\!&=&\!\! \frac{1}{2}\Big(\ipt{\cdt{\yyt}{\kkt}}{\yyt} - \ipt{\cdt{\xxt}{\kkt}}{\xxt}\Big) + \frac{i}{2}\Big(\ipt{\cdt{\yyt}{\kkt}}{\xxt} + \ipt{\cdt{\xxt}{\kkt}}{\yyt}\Big)\nonumber\\
\!\!&=&\!\! \frac{e^{-f}}{2}\Big(\ip{\cdt{\yy}{\kk}}{\yy} - \ip{\cdt{\xx}{\kk}}{\xx}\Big) + i\frac{e^{-f}}{2}\Big(\ip{\cdt{\yy}{\kk}}{\xx} + \ip{\cdt{\xx}{\kk}}{\yy}\Big)\nonumber\\
\!\!&=&\!\! e^{-f}\sigma,\phantom{\frac{1}{2}}\nonumber
\eeqa
where in the last step we have used standard formulae relating Levi-Civita connections of conformal metrics, e.g.,
$$
\cdt{Y}{T} = \cd{Y}{T} + Y(f)T + T(f)Y.
$$
Likewise for the twist:
$$
\tilde{\omega} = \tilde{g}(\kkt,[\xxt,\yyt]) = e^{-f}g(T,[X,Y]) = e^{-f}\omega. 
$$
Thus not only twist-free, but  shear-free as well, is a conformal property: $|\tilde{\sigma}|^2  = 0 \Leftrightarrow |\sigma|^2 = 0$ and $\tilde{\omega}^2 = 0 \Leftrightarrow \omega^2 = 0$.
\end{proof}

When $\text{dim}\,M \geq 4$, divergence and shear alone are not enough to characterize unit length Killing vector fields, but they do when $\text{dim}\,M = 3$:

\begin{lemma}
\label{lemma:KVF}
A unit length vector field $T$ on a Riemannian 3-manifold $(M,g)$ is a Killing vector field if and only if its flow is geodesic, divergence-free, and shear-free.
\end{lemma}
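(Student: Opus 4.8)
The plan is to recast the Killing equation \eqref{eqn:KVF} as the skew-symmetry of the $(1,1)$-tensor $A\colon v\mapsto \cd{v}{T}$, and then to observe that, in an orthonormal frame adapted to $T$, the symmetric part of $A$ decomposes exactly into the three quantities named in the statement. First I would recall the elementary identity $(\mathfrak{L}_Tg)(v,w)=g(\cd{v}{T},w)+g(v,\cd{w}{T})$, valid for any vector field, from which $T$ is a Killing vector field if and only if $g(\cd{v}{T},w)=-g(\cd{w}{T},v)$ for all $v,w$; that is, if and only if $A$ is $g$-skew-symmetric. (This reduction uses neither $\dim M=3$ nor unit length.)

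Next I would bring in the hypotheses. Differentiating $g(T,T)\equiv 1$ gives $g(\cd{v}{T},T)=0$ for every $v$, so $A$ maps $TM$ into $T^{\perp}$. Writing $A$ in the orthonormal frame $\{T,X,Y\}$ as the matrix with $(i,j)$-entry $g(A(e_j),e_i)$, the unit-length condition forces the entire first row to vanish; the lower-right $2\times 2$ block is then precisely the endomorphism $D$ of \eqref{eqn:matrix2}, while the only surviving entries of the first column, $g(\cd{T}{T},X)$ and $g(\cd{T}{T},Y)$, are the components of the acceleration $\cd{T}{T}$. Consequently $A$ is skew-symmetric if and only if (i) $\cd{T}{T}=0$, i.e.\ the flow is geodesic, and (ii) $D$ is skew-symmetric. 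By the decomposition \eqref{eqn:matrix}, the symmetric part of $D$ is exactly $\tfrac12(\mathrm{div}\,T)\,\mathrm{Id}+(\text{shear part})$, so (ii) holds if and only if $\mathrm{div}\,T=0$ and $\sigma=0$. Combining (i) and (ii) yields the claimed equivalence.

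I do not expect a genuine obstacle here; the content is organizational rather than computational. The one place to be careful is the use of the hypotheses in the second step: unit length is what annihilates the first row of $A$, so that the $T$-column contributes nothing beyond the geodesic condition, and $\dim M=3$ is what makes $T^{\perp}$ two-dimensional, so that the full trace-free symmetric part of $D$ is captured by the single complex scalar $\sigma$ of \eqref{eqn:shear} --- which is exactly why the analogous statement fails for $\dim M\ge 4$, where a shear defined in the same (Newman--Penrose) spirit detects only part of that symmetric tensor. I would close by noting that the Killing $\Rightarrow$ geodesic implication recovers the classical fact that constant-length Killing fields have geodesic flow, so the ``only if'' direction is consistent with, and refines, that observation.
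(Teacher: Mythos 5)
Your proof is correct and takes essentially the same approach as the paper's: both reduce the Killing condition to the skew-symmetry of $v\mapsto\nabla_{v}T$ via \eqref{eqn:KVF2} and identify its symmetric part, in a frame adapted to $T$, with the acceleration $\nabla_{T}T$, $\mathrm{div}\,T$, and $\sigma$. The only difference is presentational: you read this off from the matrix of $A$ together with the decomposition \eqref{eqn:matrix}, whereas the paper verifies the converse by expanding $g(\nabla_{v}T,w)+g(\nabla_{w}T,v)$ in frame components, which is the same computation in different packaging.
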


\begin{proof}
The Killing condition is equivalent to
\beqa
\label{eqn:KVF2}
g(\cd{v}{T},w) + g(\cd{w}{T},v) = 0 \hspace{.2in}\text{for all}~v, w \in TM,
\eeqa
from which it follows that any Killing vector field $T$ is divergence-free and shear-free, via \eqref{eqn:shear}.  Finally, \eqref{eqn:KVF2} also implies that any unit length Killing vector field must have geodesic flow: 
$$
\cd{T}{T} = 0.
$$
Conversely, suppose that a unit length vector field $T$ is geodesic, divergence-free, and shear-free, and consider \eqref{eqn:KVF2}.   Writing $v,w$ with respect to an orthonormal frame $\{T,X,Y\}$ as
$$
v = a_0T+a_1X+a_2Y  \comma w = b_0T+b_1X+b_2Y,
$$
we have
\beqa
g(\cd{v}{T},w) + g(\cd{w}{T},v) \!\!&=&\!\! a_1g(\cd{X}{T},w) + a_2g(\cd{Y}{T},w)\nonumber\\
&&\hspace{.2in} +\ a_1g(\cd{w}{T},X) + a_2g(\cd{w}{T},Y)\nonumber\\
&=&\!\!  2a_1b_1g(\cd{X}{T},X) + (a_1b_2 + b_1a_2)g(\cd{X}{T},Y)\nonumber\\
&&\hspace{.2in} +\ (a_1b_2 + b_1a_2)g(\cd{Y}{T},X) +  2a_2b_2\underbrace{\,g(\cd{Y}{T},Y)\,}_{g(\cd{X}{T},X)}\nonumber\\
&=&\!\! 2(a_1b_1+a_2b_2)\underbrace{\,g(\cd{X}{T},X)\,}_{\frac{1}{2}\text{div}\,T}\nonumber\\
&&\hspace{.2in} +\ (a_1b_2 + b_1a_2)\underbrace{\,\big(g(\cd{X}{T},Y)+g(\cd{Y}{T},X)\big)\,}_{2\sigma_2}.\nonumber
\eeqa
This vanishes by our assumptions, completing the proof.
\end{proof}

We can now state our plan of attack: divergence, geodesic flow, twist, and shear all involve \emph{first} derivatives of $T$, whereas curvature involves \emph{second} derivatives.  Our plan of attack, therefore, is to express the components of the Riemann curvature tensor in terms of the divergence, twist, and shear of $T$, \emph{thereby reducing second-order equations to first-order ones}\,---\,indeed, further encouraged by the fact that, as we have just seen, if $T$ is a unit length Killing vector field,  then $\text{div}\,T, \sigma$, and $\cd{T}{T}$ all vanish, so that only $T$'s twist function $\omega^2$ is unknown.  The hope is that this will simplify things enough to allow a full determination of the metric.   And it will\,---\,after we express the curvature in terms of the divergence, twist, and shear, which we now proceed to do.

\section{The Newman-Penrose Formalism for Riemannian  3-manifolds}
\label{sec:NP2}

In what follows we present the Newman-Penrose formalism for Riemannian 3-manifolds, presenting here only the resulting  equations; complete derivations can be found in \cite{AA14}.  Let $\{\kk,\xx,\yy\}$ be an orthonormal frame\,---\,with $T$ not necessarily a Killing vector field\,---\,and form the complex-valued quantities
\beqa
\label{eqn:complex}
\mm \defeq \frac{1}{\sqrt{2}}(\xx-\ii \yy)\hspace{.2in},\hspace{.2in}\mb \defeq \frac{1}{\sqrt{2}}(\xx+\ii \yy).\nonumber
\eeqa
Henceforth we work with the complex frame $\{\kk,\mm,\mb\}$, for which only
$$
\ip{\kk}{\kk} = 1 \comma \ip{\mm}{\mb} = 1
$$
are nonzero.  The following quantities associated to this complex frame play a central role in all that follows.

\begin{defn}
\label{def:spin}
The {\rm spin coefficients} of the complex frame $\{\kk,\mm,\mb\}$ are the complex-valued functions
\beqa
\label{eqn:sc}
\kappa\!\!\! &=&\!\!\! -\ip{\cd{\kk}{\kk}}{\mm}\hspace{.2in},\hspace{.2in}\rho=-\ip{\cd{\mb}{\kk}}{\mm}\hspace{.2in},\hspace{.2in}\sigma=-\ip{\cd{\mm}{\kk}}{\mm},\nonumber\\
&&\hspace{.45in}\vep=\ip{\cd{\kk}{\mm}}{\mb}\hspace{.2in},\hspace{.2in}\beta=\ip{\cd{\mm}{\mm}}{\mb}.\nonumber
\eeqa
\end{defn}
Note that, because $T$ has unit length, its flow is geodesic, $\cd{T}{T} =  0$, if and only if $\kappa = 0$; that $\sigma$, when written out in terms of its real and imaginary parts, is precisely the complex shear \eqref{eqn:shear}; and that the spin coefficient $\rho$ has real and imaginary parts given by
\beqa
\label{eqn:rho2}
\rho = -\frac{\text{div}\,\kk}{2} - \ii\, \frac{\omega}{2}\cdot
\eeqa
In other words, the first three spin coefficients $\kappa,\rho,\sigma$ stand in for the geometric properties of the flow of $\kk$ discussed above.  In terms of all five spin coefficients, the Lie brackets are
\beqa
[\kk,\mm] \!\!&=&\!\! \kappa\,\kk + (\vep + \bar{\rho})\,\mm + \sigma\,\mb,\label{eqn:LB1}\\ 
\,[\mm,\mb] \!\!&=&\!\! (\bar{\rho} - \rho)\,\kk + \bar{\beta}\,\mm - \beta\,\mb.\label{eqn:LB2}
\eeqa
(The remaining Lie bracket $[\kk,\mb]$ is obtained by complex conjugation.)  Now to the curvature; to begin with, our sign convention for the Riemann curvature tensor is
$$
R(X,Y)Z = \cd{X}{\cd{Y}{Z}} - \cd{Y}{\cd{X}{Z}} - \cd{[X,Y]}{Z},\nonumber\\
$$
in which case the Ricci tensor with respect to the complex frame $\{\kk,\mm,\mb\}$ is
$$
\text{Ric}(v,w) = R(\kk,v,w,\kk) + R(\mm,v,w,\mb) + R(\mb,v,w,\mm).
$$
The following identities satisfied by the Ricci tensor in the complex frame $\{T,\mm,\mb\}$ will appear in formulae below:
$$
\left\{
\begin{array}{rcl}
\text{Ric}(\mm,\mm) \!\!&=&\!\! R(\kk,\mm,\mm,\kk),\nonumber\\
\text{Ric}(\kk,\kk) \!\!&=&\!\! 2R(\mm,\kk,\kk,\mb),\nonumber\\
\text{Ric}(\kk,\mm) \!\!&=&\!\! R(\mm,\kk,\mm,\mb),\nonumber\\
\text{Ric}(\mm,\mb) \!\!&=&\!\! \frac{1}{2}\text{Ric}(\kk,\kk) + R(\mb,\mm,\mb,\mm).\nonumber
\end{array}
\right.
$$
The Newman-Penrose begins by expressing the Lie brackets in terms of spin coefficients, as we saw in \eqref{eqn:LB1} and \eqref{eqn:LB2} above.  It then moves down to the level of curvature, by expressing the following components of the curvature tensor,
\beqa
R(T,\mb,T,\mm) &\commas& R(T,\mm,T,\mm) \comma R(\mb,\mm,T,\mm)\nonumber\\
&&\hspace{-.75in} R(T,\mm,\mm,\mb) \comma R(\mb,\mm,\mm,\mb),\nonumber
\eeqa
in terms of the Ricci tensor and the spin coefficients.  Doing so, the following (first-order) equations arise; they play the driving role in our classification:
\beqa
\label{eqn:Sachs1}
\kk(\rho) - \mb(\kappa) \!\!&=&\!\! |\kappa|^2 + |\sigma|^2 + \rho^2 + \kappa\bar{\beta} + \frac{1}{2} {\rm Ric}(\kk,\kk),\label{eqn:S1}\\
\kk(\sigma) - \mm(\kappa) \!\!&=&\!\! \kappa^2 + 2\sigma\vep + \sigma(\rho + \bar{\rho}) - \kappa \beta + {\rm Ric}(\mm,\mm),\phantom{\frac{1}{2}}\label{eqn:S2}\\
\mm(\rho) - \mb(\sigma) \!\!&=&\!\! 2 \sigma\bar{\beta} + (\bar{\rho}-\rho)\kappa + {\rm Ric}(\kk,\mm),\phantom{\frac{1}{2}}\label{eqn:S3}\\
\kk(\beta) - \mm(\vep) \!\!&=&\!\! \sigma(\bar{\kappa} - \bar{\beta}) + \kappa (\vep - \bar{\rho}) + \beta(\vep + \bar{\rho}) - {\rm Ric}(\kk,\mm),\phantom{\frac{1}{2}}\label{eqn:S5}\\
\mm(\bar{\beta}) + \mb(\beta) \!\!&=&\!\! |\sigma|^2 - |\rho|^2 -2|\beta|^2 + (\rho - \bar{\rho})\vep - {\rm Ric}(\mm,\mb) + \frac{1}{2} {\rm Ric}(\kk,\kk).\nonumber\\\label{eqn:S4}
\eeqa

Finally, up to complex conjugation, there are two nontrivial differential Bianchi identities:
\beqa
&&\hspace{-.4in}\kk({\rm Ric}(\kk,\mm))\, -\, \frac{1}{2}\mm({\rm Ric}(\kk,\kk)) + \mb({\rm Ric}(\mm,\mm))=\nonumber\\
&&\hspace{-.1in}\kappa\,\big({\rm Ric}(\kk,\kk) - \text{Ric}(\mm,\mb)\big) + \big(\vep + 2\rho + \bar{\rho}\big){\rm Ric}(\kk,\mm)\phantom{\frac{1}{2}}\label{eqn:bid}\\
&&\hspace{1.3in}+\, \sigma\,{\rm Ric}(\kk,\mb)- \big(\bar{\kappa} + 2\bar{\beta}\big){\rm Ric}(\mm,\mm)\phantom{\frac{1}{2}}\nonumber\\
\text{and}&&\nonumber\\
&&\hspace{-.4in}\mm({\rm Ric}(\kk,\mb)) + \mb({\rm Ric}(\kk,\mm)) - \kk\big({\rm Ric}(\mm,\mb) - (1/2){\rm Ric}(\kk,\kk)\big) =\phantom{\frac{1}{2}}\nonumber\\
&&\hspace{-.1in}(\rho+\bar{\rho})\big({\rm Ric}(\kk,\kk) - {\rm Ric}(\mm,\mb)\big) - \bar{\sigma}{\rm Ric}(\mm,\mm) - \sigma{\rm Ric}(\mb,\mb)\phantom{\frac{1}{2}}\label{eqn:bid2}\\
&&\hspace{1.2in} -\, \big(2\bar{\kappa} + \bar{\beta}\big){\rm Ric}(\kk,\mm) - \big(2\kappa + \beta\big){\rm Ric}(\kk,\mb).\phantom{\frac{1}{2}}\nonumber
\eeqa

We now immediately specialize to the case when $T$ is a Killing vector field:

\begin{lemma}
\label{lemma:KVF2}
Let $(M,g)$ be a Riemannian 3-manifold admitting a unit length Killing vector field $T$ with twist function $\omega^2$.  With respect to any complex frame $\{T,\mm,\mb\}$, the Ricci tensor $\emph{Ric}$ and scalar curvature $S$ satisfy
\beqa
T(\omega) \!\!\!&=&\!\!\! 0 \comma \emph{\text{Ric}}(T,T) = \frac{\omega^2}{2} \comma \emph{\text{Ric}}(\mm,\mm) = 0,\nonumber\\
&&\hspace{-.3in}\mm(\bar{\beta}) + \mb(\beta) = -2|\beta|^2 -i\omega\vep - \frac{1}{2}\Big(S - \frac{\omega^2}{2}\Big)\cdot\label{eqn:Ric0}
\eeqa
When \eqref{eqn:Ric0} is written in terms of the underlying orthonormal frame $\{T,X,Y\}$ of the complex frame, it is
\beqa
\label{eqn:Ric}
X(\emph{\text{div}}\,X) + Y(\emph{\text{div}}\,Y) = -(\emph{\text{div}}\,X)^2 - (\emph{\text{div}}\,Y)^2 -i\omega\vep -\frac{1}{2}\Big(S - \frac{\omega^2}{2}\Big)\cdot
\eeqa
\end{lemma}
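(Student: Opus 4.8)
The plan is to feed the Killing hypothesis directly into the Newman--Penrose equations \eqref{eqn:S1}, \eqref{eqn:S2}, \eqref{eqn:S4} and to read off the stated identities by comparing real and imaginary parts. By Lemma \ref{lemma:KVF}, a unit length Killing vector field has geodesic, divergence-free, and shear-free flow; in terms of spin coefficients this means $\kappa = 0$ and $\sigma = 0$ identically on the given open set (hence all their frame derivatives vanish as well), and, by \eqref{eqn:rho2}, that $\rho = -\tfrac{i}{2}\omega$ is purely imaginary, so $\rho^2 = -\tfrac{1}{4}\omega^2$, $\rho + \bar\rho = 0$, and $\rho - \bar\rho = -i\omega$.

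With $\kappa = \sigma = 0$, equation \eqref{eqn:S1} reduces to $T(\rho) = \rho^2 + \tfrac{1}{2}\text{Ric}(T,T)$, i.e. $-\tfrac{i}{2}T(\omega) = -\tfrac{1}{4}\omega^2 + \tfrac{1}{2}\text{Ric}(T,T)$; the left side is purely imaginary and the right side real, so both vanish, giving $\text{Ric}(T,T) = \omega^2/2$ and $T(\omega) = 0$ at once. Equation \eqref{eqn:S2} with $\kappa = \sigma = 0$ immediately yields $\text{Ric}(\mm,\mm) = 0$. Equation \eqref{eqn:S4} with $\sigma = 0$ becomes $\mm(\bar\beta) + \mb(\beta) = -\tfrac{1}{4}\omega^2 - 2|\beta|^2 - i\omega\vep - \text{Ric}(\mm,\mb) + \tfrac{1}{2}\text{Ric}(T,T)$; I would then use the scalar-curvature trace in the complex frame, $S = \text{Ric}(T,T) + 2\,\text{Ric}(\mm,\mb)$ (valid because $g(T,T) = g(\mm,\mb) = 1$ and $\text{Ric}$ is symmetric), to rewrite $\text{Ric}(\mm,\mb) = \tfrac{1}{2}(S - \tfrac{1}{2}\omega^2)$, whereupon the two $\omega^2/4$ terms cancel against $\tfrac{1}{2}\text{Ric}(T,T)$ and \eqref{eqn:Ric0} drops out.

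Finally, to pass from \eqref{eqn:Ric0} to its orthonormal-frame form \eqref{eqn:Ric}, I would expand $\beta = g(\cd{\mm}{\mm},\mb)$ and $\vep = g(\cd{T}{\mm},\mb)$ with $\mm = \tfrac{1}{\sqrt{2}}(X-iY)$ and $\mb = \tfrac{1}{\sqrt{2}}(X+iY)$. Using $\cd{T}{T} = 0$ and the unit-length relations $g(\cd{v}{X},X) = g(\cd{v}{Y},Y) = 0$, a short computation gives $\beta = \tfrac{1}{\sqrt{2}}(\text{div}\,X - i\,\text{div}\,Y)$, hence $2|\beta|^2 = (\text{div}\,X)^2 + (\text{div}\,Y)^2$ and $\mm(\bar\beta) + \mb(\beta) = X(\text{div}\,X) + Y(\text{div}\,Y)$, while $\vep = \tfrac{i}{2}\big(g(\cd{T}{X},Y) - g(\cd{T}{Y},X)\big)$ is purely imaginary, so $-i\omega\vep$ is real (consistent with the left side of \eqref{eqn:Ric}). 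Substituting these into \eqref{eqn:Ric0} produces \eqref{eqn:Ric}. The only genuinely fiddly step is this last conversion: carefully expanding $\cd{\mm}{\mm}$ and the Leibniz derivatives $\mm(\bar\beta)$ and $\mb(\beta)$ in the real frame, and keeping track of which connection coefficients vanish by unit length or by geodesic flow; the rest is a one-line substitution into the NP identities \eqref{eqn:S1}--\eqref{eqn:S4} and \eqref{eqn:rho2} established above.
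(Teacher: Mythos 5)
Your proposal is correct and follows essentially the same route as the paper: insert $\kappa=\sigma=0$ and $\rho+\bar\rho=0$ (from Lemma \ref{lemma:KVF}) into \eqref{eqn:S1}, \eqref{eqn:S2}, \eqref{eqn:S4}, separate real and imaginary parts, use the trace identity $S=\text{Ric}(T,T)+2\,\text{Ric}(\mm,\mb)$, and convert via $\beta=\tfrac{1}{\sqrt{2}}(\text{div}\,X-i\,\text{div}\,Y)$, which is exactly the paper's \eqref{eqn:beta}. All the sign and bookkeeping details you flag (purely imaginary $\rho$ and $\vep$, cancellation of the $\omega^2/4$ terms, the Leibniz expansion of $\mm(\bar\beta)+\mb(\beta)$) check out.
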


\begin{proof}
By Lemma \ref{lemma:KVF}, we know that $$\kappa = \sigma = \rho + \bar{\rho} = 0;$$ inserting these into \eqref{eqn:S1} and \eqref{eqn:S2} directly yields the first line of equations in the  statement of Lemma \ref{lemma:KVF2}; e.g., 
$$
\text{Ric}(T,T) = \frac{\omega^2}{2} \comma T(\omega) = 0,
$$
are, respectively, the real and imaginary parts of \eqref{eqn:S1}.  Meanwhile, \eqref{eqn:Ric0} follows from \eqref{eqn:S4}, which has no imaginary part, and the fact that  the scalar curvature $S$ in terms of the complex frame $\{T,\mm,\mb\}$ is
\beqa
\label{eqn:S}
S = \text{Ric}(T,T) + 2\text{Ric}(\mm,\mb) \imp  \text{Ric}(\mm,\mb) = \frac{S}{2} - \frac{\omega^2}{4}\cdot
\eeqa
Finally, \eqref{eqn:Ric} follows from the fact that, when $\cd{T}{T} = 0$,
\beqa
\beta = \frac{1}{\sqrt{2}}\big(\ip{\cd{\yy}{\xx}}{\yy} + i \ip{\cd{\xx}{\xx}}{\yy}\big) = \frac{1}{\sqrt{2}} (\text{div}\,\xx - i\,\text{div}\,\yy),\label{eqn:beta}
\eeqa
which completes the proof.
\end{proof}

We have not yet considered the differential Bianchi identities; let us do so now.  Inserting the contents of Lemma \ref{lemma:KVF2} into \eqref{eqn:bid} and \eqref{eqn:bid2}, as well as $\bar{\rho} = -\rho$, yields
$$
\kk(\mm(\rho)) = (\vep + \bar{\rho})\mm(\rho)
$$
for \eqref{eqn:bid}; but this is precisely the Lie bracket \eqref{eqn:LB1} applied to $\rho$ (bearing in mind that $T(\rho) = 0$), and therefore carries no new information. As for \eqref{eqn:bid2}, it yields
$$
-\mm(\mb(\rho)) + \mb(\mm(\rho))  = - \bar{\beta}\,\mm(\rho) + \beta\,\mb(\rho),
$$
where $T(S) = 0$ and $\bar{\rho} = -\rho$ have been used.  But this is precisely the Lie bracket \eqref{eqn:LB2} applied to $\rho$, so that \eqref{eqn:bid2} also yields no new information.  

\section{Local Coordinates}
The goal of this section is to establish the ``right" local coordinates in which to prove Theorem \ref{thm:main} in the next section.  To begin with, recall that because
$$
\kappa = \sigma = \rho + \bar{\rho} = 0,
$$
the only spin coefficients remaining are $\vep$ and $\beta$.  Observe that the former is in fact purely imaginary,
\beqa
\label{eqn:imag}
\vep = i\ip{\cd{\kk}{\xx}}{\yy},
\eeqa
and the latter, when $\cd{T}{T} = 0$, is given by \eqref{eqn:beta}.   The following ``gauge freedom" simultaneously enjoyed by these two spin coefficients will prove useful in the proof of Theorem \ref{thm:main}:

\begin{prop}
\label{prop:boost}
Let $T$ be a unit length Killing vector field with twist function $\omega^2$ and $\{\kk,\mm,\mb\}$ a complex frame.  Then there exists a smooth real-valued function $\vartheta$ such that the complex frame $\{T,\mm_*,\mb_*\}$ defined by the rotation
$$
\mm_* \defeq e^{i\vartheta}\mm\comma\mb_* \defeq e^{-i\vartheta}\mb
$$
has spin coefficients $\kappa_* = \sigma_* = 0, \rho_* = \rho$,
\beqa
\label{eqn:guage}
\vep_* = \rho \comma \emph{\text{Re}}(\beta_*) = 0 \comma T(\beta_*) = 0.
\eeqa
\end{prop}

\begin{proof}
By definition, 
$$
\kappa_* = -\ip{\cd{\kk}{\kk}}{\mm_*} = e^{i\vartheta}\kappa = 0;
$$
similarly, $\sigma_* = e^{2i\vartheta}\sigma = 0$, and $\rho_* = \rho$ (in particular, $\omega_*^2 = \omega^2$).  Next,
\beqa
\vep_* \!\!&=&\!\! \ip{\cd{\kk}{\mm_*}}{\mb_*}\nonumber\\
&=&\!\! e^{-i \vartheta}\ip{\cd{\kk}{(e^{i\vartheta}\mm)}}{\mb}\nonumber\\
&=&\!\! \vep + e^{-i\vartheta}\kk(e^{i \vartheta})\nonumber\\
&=&\!\! \vep + i\kk(\vartheta).\label{eqn:gauge2}
\eeqa
Similarly,
\beqa
\beta_* \!\!&=&\!\! \ip{\cd{\mm_*}{\mm_*}}{\mb_*}\nonumber\\
&=&\!\!\ip{\cd{\mm}{(e^{i\vartheta}\mm)}}{\mb}\nonumber\\
&=&\!\! e^{i\vartheta}( \beta + i\mm(\vartheta)).\nonumber
\eeqa
By \eqref{eqn:imag} and \eqref{eqn:gauge2}, we may choose a locally defined function $\vartheta$ so that
$$
\vep_* = \rho_* = \rho.
$$
Now, choose any other function $\psi$ satisfying $T(\psi) = 0$ and rotate $\mm_*,\mb_*$ by $\psi$; let $\{T,\mm_o,\mb_o\}$ denote the corresponding frame.  Then the analogue of \eqref{eqn:gauge2} for the frame $\{T,\mm_o,\mb_o\}$ shows that $\vep_o$ remains unchanged,
$$
\vep_o = \vep_* = \rho_* = \rho,
$$
so that our task would be complete if we can find a $\psi$ satisfying
\beqa
\label{eqn:char}
T(\psi) = 0 \comma \text{Re}(\beta_o) = 0 \comma T(\beta_o) = 0.
\eeqa
To do so, go back to the complex frame $\{T,\mm_*,\mb_*\}$ and observe that when $\vep_* = \rho_*$, then
\beqa
\label{eqn:LB0}
[\kk,\mm_*] \overset{\eqref{eqn:LB1}}{=} 0.
\eeqa
Let $\{T,X_*,Y_*\}$ denote the underlying orthonormal frame corresponding to $\{T,\mm_*,\mb_*\}$.  Since $[T,X_*] = 0$, there exist local coordinates $(t,u,v)$ and functions $p,q,r$ such that
$$
\kk = \partial_t \comma \xx_* = \partial_u \comma \yy_* =  p\partial_t+q\partial_u+r\partial_v,
$$
with $p,q,r$ functions of $u,v$ only, since $[T,Y_*] = 0$, and with $r$ nowhere vanishing. The  coframe metrically equivalent to $\{T,X_*,Y_*\}$ is therefore
$$
\kk^{\flat} = dt-\frac{p}{r}dv \comma X_*^{\flat} = du-\frac{q}{r}dv \comma \yy_*^{\flat} = \frac{1}{r}dv.
$$
Next, since $(X_*^{\flat})^2 + (Y_*^{\flat})^2$ defines a Riemannian metric on the 2-manifold with coordinates $\{(u,v)\}$, and since any Riemannian 2-manifold is locally conformally flat (see, e.g., \cite{chern}), it follows that there exist coordinates $(x,y)$ and a smooth function $\lambda(x,y)$ such that
$$
(X_*^{\flat})^2 + (Y_*^{\flat})^2 = e^{2\lambda}(dx^2+dy^2).
$$
By a rotation in $x, y$ if necessary, we may further assume that
$$
X_*^\flat = e^{\lambda}dx \comma Y_*^\flat = e^{\lambda}dy.
$$
In the new coordinates $(t,x,y)$, we thus have  that
$$
T = \partial_t \comma X_* = e^{-\lambda}(\partial_x+a\partial_t) \comma Y_* = e^{-\lambda}(\partial_y+b\partial_t),
$$
for some smooth functions $a(x,y), b(x,y)$.  With these coordinates in hand, we now return to the task of satisfying $\text{Re}(\beta_o) = T(\beta_o) = 0$ in \eqref{eqn:char}.  For the former, $\psi(x,y)$ should satisfy $\text{Re}(\beta_o) = \text{Re}\big(e^{i\psi}( \beta_* + i\mm_*(\psi))\big) = 0$, or
\beqa
\label{eqn:PDE}
e^{i\psi}( \beta_* + i\mm_*(\psi)) + e^{-i\psi}( \bar{\beta}_* - i\,\mb_*(\psi)) = 0.
\eeqa
When expanded, and using the fact that 
$$
\text{div}\,X_* = \frac{\lambda_x}{e^\lambda}  \comma \text{div}\,Y_* = \frac{\lambda_y}{e^\lambda},
$$
\eqref{eqn:PDE} is a quasilinear first-order PDE in $\psi$,
$$
(\sin\psi)\psi_x - (\cos \psi)\psi_y = (\cos\psi)\lambda_x + (\sin\psi) \lambda_y,
$$
which has a solution by the method of characteristics.  For the latter, \eqref{eqn:S3} and \eqref{eqn:S5} together yield
$$
T(\beta_o) - \underbrace{\,\mm(\vep_o)\,}_{\mm_o(\rho_o)} \overset{\eqref{eqn:S5}}{=} -\!\!\underbrace{\,\text{Ric}(T,\mm_o)\,}_{\mm_o(\rho_o)~\text{by}~\eqref{eqn:S3}} \imp T(\beta_o) = 0,
$$
completing the proof.
\end{proof}

The following Corollary collects together what we've established so far:

\begin{cor}
\label{cor:above}
Let $(M,g)$ be a Riemannian 3-manifold and  $T$ a unit length Killing vector field with twist function $\omega^2$.  Then there exists an orthonormal frame $\{T,X,Y\}$ satisfying
\beqa
\label{eqn:dgauge}
\kappa = \sigma = 0 \comma \rho = \vep = -\frac{i}{2}\omega \comma  \beta = -\frac{i}{\sqrt{2}}\,\emph{\text{div}}\,Y,
\eeqa
and with $T(\omega) = T(\beta) = 0$.  In this frame, \eqref{eqn:Ric} takes the form
\beqa
\label{eqn:Ric2}
Y(\emph{\text{div}}\,Y) = - (\emph{\text{div}}\,Y)^2 -\frac{1}{2}\Big(S + \frac{\omega^2}{2}\Big)\cdot
\eeqa
\end{cor}
Notice that \eqref{eqn:Ric2} implies that such a frame may not always exist \emph{globally}; e.g., if $M$ is compact and $S$ is nonnegative and positive somewhere, then a standard Riccati argument yields that in such a case the only complete solution to \eqref{eqn:Ric2} is one where $\text{div}\,Y = S + \frac{\omega^2}{2} = 0$, which is impossible.  We now proceed to our local classification.

\section{The Local Classification}
\label{sec:proof}
Theorem \ref{thm:main} follows from one further modification to the orthonormal frame satisfying \eqref{eqn:dgauge}:

\begin{customthm}{1}
\label{thm:main}
Let $(M,g)$ be a Riemannian 3-manifold that admits a unit length Killing vector field $T$.  Then there exist local coordinates $(t,r,\theta)$ and a smooth function $\varphi(r,\theta)$ such that
\beqa
\label{eqn:gT}
T = \partial_t \comma g = (T^{\flat})^2 + dr^2 + \varphi^2d\theta^2,
\eeqa
and where the quotient metric $dr^2 + \varphi^2d\theta^2$ has Gaussian curvature
\beqa
\label{eqn:gauss}
-\frac{\varphi_{rr}}{\varphi} = \frac{1}{2}\big(S + \emph{\text{Ric}}(T,T)\big),
\eeqa
with $S$ and $\emph{\text{Ric}}$ the scalar curvature and Ricci tensor of $g$, respectively.  If \eqref{eqn:gT0} is given globally on $M = \RR^3$ \emph{(}with $r,\theta$ polar coordinates on $\RR^2$\emph{)}, and if $\emph{\text{Ric}}(T,T) =  0, \varphi(0,\theta) = 0$, $\varphi_r(0,\theta)=1$, $\varphi(-r,\theta+\pi) = -\varphi(r,\theta)$, and $\varphi > 0$ when $r > 0$, then $g$ is complete if and only if
$$
\lim_{r\to \infty} \inf_{|p| \geq r} S\big|_p\leq 0.
$$
\end{customthm}

\begin{proof}
Let $(M,g)$ be a Riemannian 3-manifold and  $T$ a unit length Killing vector field with twist function $\omega^2$.  By Corollary \ref{cor:above}, there exist a local orthonormal frame $\{T,X,Y\}$ satisfying \eqref{eqn:dgauge} and coordinates $(t,x,y)$ in which $T = \partial/\partial t$.  Let $\{T^{\flat},X^{\flat},Y^{\flat}\}$ denote the dual coframe.  We now modify the coordinates $(t,x,y)$ while keeping $T = \partial/\partial t$ unchanged.  The key is that \eqref{eqn:LB1} and \eqref{eqn:LB2} satisfy
$$
[T,X] = [T,Y] = 0 \comma [X,Y] = \omega T + (\text{div}\,Y) X,
$$
from which it follows that $Y^{\flat}$ is closed, $dY^{\flat} = 0$; hence
$$
Y^{\flat}  = dr
$$
for some smooth function $r(x,y)$.  Similarly,
$$
dX^{\flat} = (\text{div}\,Y) Y^{\flat} \wedge X^{\flat} \imp X^{\flat} = \varphi d\theta
$$
for some smooth functions $\varphi(x,y)>0$ and $\theta(x,y)$, with the former satisfying
\beqa
\label{eqn:cr}
Y(\varphi) = (\text{div}\,Y)\varphi
\eeqa
(recall that $T(\beta)=0$). Since $X(r)=Y(\theta) = 0$, we can define new coordinates $(t,r,\theta)$, in terms of which the frame $\{T,X,Y\}$ takes the form
\beqa
\label{eqn:coord1}
T = \partial_t  \comma X = h\partial_t  + \frac{1}{\varphi}\partial_\theta \comma Y = k\partial_t + \partial_r,
\eeqa
for some smooth functions $h,k$; furthermore, $\varphi_t = h_t = k_t = 0$ (recall that $[T,X] = [T,Y] = 0$), so that $\varphi,h,k$ are all functions of $r,\theta$ only.  Thus
$$
g = (T^{\flat})^2 + (X^{\flat})^2 + (Y^{\flat})^2 = (T^{\flat})^2 + dr^2 + \varphi^2d\theta^2,
$$
confirming \eqref{eqn:gT}.  Now, the quotient metric $dr^2+\varphi^2d\theta^2$ has scalar curvature $-2\varphi_{rr}/\varphi$, hence Gaussian curvature $-\varphi_{rr}/\varphi$.  To relate this to the curvature of $(M,g)$, we take a $Y$-derivative of \eqref{eqn:cr}, make use of \eqref{eqn:Ric2}, and note that $\partial_t(\text{div}\,Y) = 0$ by \eqref{eqn:guage}, to obtain
$$
\varphi_{rr} = Y(\text{div}\,Y)\varphi + (\text{div}\,Y)^2\varphi \imp -\frac{\varphi_{rr}}{\varphi} \overset{\eqref{eqn:Ric2}}{=}\frac{1}{2}\Big(S + \frac{\omega^2}{2}\Big)\cdot 
$$
Since $\text{Ric}(T,T) =  \frac{\omega^2}{2}$ by Lemma \ref{lemma:KVF2}, this confirms \eqref{eqn:gauss}.  There remains, finally, the question of completeness; thus, suppose that on $\RR^3 = \{(t,r,\theta)\}$ with $r,\theta$ polar coordinates on $\RR^2$, and with metric $g$ given by \eqref{eqn:gT}, we have the globally defined vector fields appearing in \eqref{eqn:coord1}.  If $\text{Ric}(T,T) = \frac{\omega^2}{2} = 0$, so that $g(T,[X,Y]) = 0$, then $T= \partial_t$ is parallel in $(\RR^3,g)$ and thus $T^{\flat} = dt$, from which it follows from \eqref{eqn:coord1} that $h=k = 0$, hence that $\{\partial_t,\partial_r,\partial_\theta\}$ is orthogonal.  Now, let $\gamma(s) = (t(s),r(s),\theta(s))$ be a geodesic in $(\RR^3,g)$; since $g(T,\gamma') = g(\partial_t,\gamma')$ is a constant, which constant we denote by $c$, the tangent vector $\gamma'(s)$ takes the form
$$
\gamma'(s) = c\partial_t|_{\gamma(s)} + \dot{r}(s)\partial_r|_{\gamma(s)} + \dot{\theta}(s)\partial_\theta|_{\gamma(s)}.
$$
Since $g$ splits as the product of $dt^2$ and $dr^2 + \varphi^2d\theta^2$, $\gamma(s)$ is a geodesic if and only if $(r(s),\theta(s))$ is a geodesic in $(\RR^2,dr^2+\varphi^2d\theta^2)$.  We now use \cite{kw2}: when $r,\theta$ are polar coordinates, with $\varphi(0,\theta) = 0, \varphi_r(0,\theta)=1$, $\varphi(-r,\theta+\pi) = -\varphi(r,\theta)$, and $\varphi > 0$ when $r > 0$, then the metric is complete if and only if 
$$
\lim_{r\to \infty} \inf_{|p| \geq r} \Big(\!\!-\!\frac{\varphi_{rr}}{\varphi}\Big)\Big|_p \leq 0.
$$
Since $-\frac{\varphi_{rr}}{\varphi} = \frac{1}{2}\big(S + \text{Ric}(T,T)\big) = \frac{1}{2}S$, the proof is complete.
\end{proof}

A final remark regarding Theorem \ref{thm:main}: bear in mind that, since in general
$$
T^{\flat} = dt -\varphi hd\theta -kdr,
$$
the coordinates $(t,r,\theta)$ above are not ``semigeodesic" (see, e.g., \cite{Lee}); indeed, the metric components $g_{ij}$ in the coordinate basis $\{\partial_t,\partial_r,\partial_\theta\}$ are given by
\beqa
\label{eqn:gtr0}
(g_{ij}) = \begin{pmatrix}
1 & -k & -\varphi h\\
-k & 1+k^2 & \varphi hk\\
 -\varphi h & \varphi h k & \varphi^2(1+h^2)
\end{pmatrix}\cdot
\eeqa
We now proceed to the Lorentzian setting.

\section{The Lorentzian setting}
\label{sec:Lor}
Before proceeding to a proof of the Lorentzian analogue of Theorem \ref{thm:main}, we first collect a few facts from Lorentzian geometry; in what follows we adopt the metric index $(-\!+\!+)$.  First, a vector field $T$ on a Lorentzian manifold $(M,\gL)$ is \emph{timelike} if $\gL(T,T) < 0$.  Second, if a timelike $T$ has unit length, $\gL(T,T) = -1$, then
\beqa
\label{eqn:gR2}
\gR \defeq \gL + 2(T^{\flat_{\scriptscriptstyle L}})^2
\eeqa
defines a Riemannian metric on $M$ (here $T^{\flat_{\scriptscriptstyle L}} = \gL(T,\cdot)$).  Third, the following properties hold between $\gR$ and $\gL$:
\begin{enumerate}[leftmargin=*]
\item[1.] $T$ is a unit length Killing vector field with respect to $\gR$ if and only if $T$ is a unit timelike Killing vector field with respect to $\gL$ (see, e.g., \cite{olea}).
\item[2.] If $T$ is a $\gR$-unit length Killing vector field, then $$\text{Ric}_{\scriptscriptstyle R}(T,T) = \text{Ric}_{\scriptscriptstyle L}(T,T)$$ (consult \cite{olea}; this follows because $\nabla^{\scriptscriptstyle R}_{\!X}T = -\nabla^{\scriptscriptstyle L}_{\!X}T$ for any unit length $X$ that is $\gR$- or $\gL$-orthogonal to $T$, where $\nabla^{\scriptscriptstyle R}$ and $\nabla^{\scriptscriptstyle L}$ are, respectively, the Levi-Civita connections of $\gR$ and $\gL$), while their scalar curvatures $S_{\scriptscriptstyle R}$ and $S_{\scriptscriptstyle L}$ satisfy
$$
S_{\scriptscriptstyle L} = S_{\scriptscriptstyle R} + 2\text{Ric}_{\scriptscriptstyle R}(T,T).
$$
In particular, $S_{\scriptscriptstyle R} + \text{Ric}_{\scriptscriptstyle R}(T,T) = S_{\scriptscriptstyle L} - \text{Ric}_{\scriptscriptstyle L}(T,T)$.
\item[3.] If $T$ is $\gR$-unit length Killing vector field, then $\gL$ is complete if and only if $\gR$ is complete (see \cite{romero}).
\end{enumerate}

With these facts established, the Lorentzian analogue of Theorem \ref{thm:main} now follows easily:

\begin{cor}
\label{thm:main2}
Let $(M,\gL)$ be a Lorentzian 3-manifold that admits a unit timelike Killing vector field $T$.  Then there exists local coordinates $(t,r,\theta)$ and a smooth function $\varphi(r,\theta)$ such that
\beqa
\label{eqn:gTL}
T = \partial_t \comma \gL = -(T^{\flat})^2 + dr^2 + \varphi^2d\theta^2,
\eeqa
and where the quotient metric $dr^2 + \varphi^2d\theta^2$ has Gaussian curvature
$$
-\frac{\varphi_{rr}}{\varphi} = \frac{1}{2}\big(S_{\scriptscriptstyle L} - \emph{\text{Ric}}_{\scriptscriptstyle L}(T,T)\big),
$$
with $S_{\scriptscriptstyle L}$ and $\emph{\text{Ric}}_{\scriptscriptstyle L}$ the scalar curvature and Ricci tensor of $\gL$, respectively.  Furthermore, $\gL$ is complete if and only if $\gR$ is complete, where $\gR$ is the corresponding Riemannian metric given by \eqref{eqn:gR2}.
\end{cor}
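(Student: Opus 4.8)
The plan is to reduce Corollary~\ref{thm:main2} to Theorem~\ref{thm:main} applied to the associated Riemannian metric $\gR$ of \eqref{eqn:gR2}, exploiting the three comparison facts listed above. The one preliminary computation I would do is to pin down the relation between the two metrically dual one-forms: since $\gL(T,T) = -1$, one finds $\gR(T,\cdot) = \gL(T,\cdot) + 2\,T^{\flat_{\scriptscriptstyle L}}(T)\,T^{\flat_{\scriptscriptstyle L}} = -\,\gL(T,\cdot)$, i.e. $T^{\flat_{\scriptscriptstyle R}} = -\,T^{\flat_{\scriptscriptstyle L}}$, whence $(T^{\flat_{\scriptscriptstyle R}})^2 = (T^{\flat_{\scriptscriptstyle L}})^2$ and $\gR(T,T) = 1$. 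This small identity is what will make the Riemannian and Lorentzian canonical forms line up term for term.

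Next I would invoke fact~1: $T$ is a unit length Killing vector field for $\gR$, so Theorem~\ref{thm:main} supplies local coordinates $(t,r,\theta)$ and a smooth $\varphi(r,\theta)>0$ with $T = \partial_t$, $\gR = (T^{\flat_{\scriptscriptstyle R}})^2 + dr^2 + \varphi^2 d\theta^2$, and $-\varphi_{rr}/\varphi = \tfrac{1}{2}\big(S_{\scriptscriptstyle R} + \text{Ric}_{\scriptscriptstyle R}(T,T)\big)$. Feeding this into $\gL = \gR - 2(T^{\flat_{\scriptscriptstyle L}})^2 = \gR - 2(T^{\flat_{\scriptscriptstyle R}})^2$ immediately produces $\gL = -(T^{\flat_{\scriptscriptstyle L}})^2 + dr^2 + \varphi^2 d\theta^2$, which is \eqref{eqn:gTL}; and fact~2, in the form $S_{\scriptscriptstyle R} + \text{Ric}_{\scriptscriptstyle R}(T,T) = S_{\scriptscriptstyle L} - \text{Ric}_{\scriptscriptstyle L}(T,T)$, rewrites the Gaussian curvature identity in the stated form.

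For the completeness assertion I would assume \eqref{eqn:gTL} holds globally on $M = \RR^3$ and observe that, since $(T^{\flat_{\scriptscriptstyle R}})^2 = (T^{\flat_{\scriptscriptstyle L}})^2$, the relation $\gR = \gL + 2(T^{\flat_{\scriptscriptstyle L}})^2$ shows $\gR = (T^{\flat_{\scriptscriptstyle R}})^2 + dr^2 + \varphi^2 d\theta^2$ globally on $\RR^3$, i.e. $\gR$ is globally of the form \eqref{eqn:gT}. Then Theorem~\ref{thm:main}'s completeness criterion applies to $\gR$, and via fact~2 its limit-inequality is literally the one stated for $S_{\scriptscriptstyle L} - \text{Ric}_{\scriptscriptstyle L}(T,T)$; fact~3 then gives that $\gL$ is complete if and only if $\gR$ is, which yields both the explicit criterion and the concluding ``equivalently'' clause.

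I do not expect a genuine obstacle here: essentially all the content has been front-loaded into Theorem~\ref{thm:main} and the three comparison facts (drawn from \cite{olea,romero}). The only place requiring real care is the bookkeeping around $T^{\flat_{\scriptscriptstyle R}} = -T^{\flat_{\scriptscriptstyle L}}$ and the verification that the global hypothesis on \eqref{eqn:gTL} transfers to a global statement about $\gR$, so that Theorem~\ref{thm:main} may legitimately be invoked on $\RR^3$.
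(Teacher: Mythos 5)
Your proposal is correct and follows essentially the same route as the paper: apply Theorem \ref{thm:main} to the associated Riemannian metric $\gR$ and translate back using the three comparison facts (Killing equivalence, $S_{\scriptscriptstyle R}+\text{Ric}_{\scriptscriptstyle R}(T,T)=S_{\scriptscriptstyle L}-\text{Ric}_{\scriptscriptstyle L}(T,T)$, and the completeness equivalence from \cite{romero}). Your extra bookkeeping, namely $T^{\flat_{\scriptscriptstyle R}}=-T^{\flat_{\scriptscriptstyle L}}$, hence $(T^{\flat_{\scriptscriptstyle R}})^2=(T^{\flat_{\scriptscriptstyle L}})^2$, and the transfer of the global hypothesis on \eqref{eqn:gTL} to the form \eqref{eqn:gT0} for $\gR$, is exactly the detail the paper leaves implicit in its ``follows immediately'' argument.
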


\begin{proof}
By our remarks above, $T$ is a unit length Killing vector field with respect to the Riemannian metric $\gR$, with $S_{\scriptscriptstyle R} + \text{Ric}_{\scriptscriptstyle R}(T,T) = S_{\scriptscriptstyle L} - \text{Ric}_{\scriptscriptstyle L}(T,T)$; Corollary \ref{thm:main2} therefore follows immediately from Theorem \ref{thm:main}.
\end{proof}

\section{The compact case}
We now prove a global obstruction result in the compact setting.  Thus, let $(M,g)$ be a compact Riemannian 3-manifold equipped with a globally defined unit length Killing vector field. 
With respect to a local orthonormal frame $\{T,X,Y\}$, we have, by Lemma \ref{lemma:KVF2}, that
$$
\underbrace{\,\text{Ric}(X,X) = \text{Ric}(Y,Y)\,}_{\text{via}~\text{Ric}(\mm,\mm)\,=\,0} \comma \text{Ric}(T,T) = \frac{\omega^2}{2}.
$$
 In fact, because
$$
\frac{S}{2} - \frac{\omega^2}{4} \overset{\eqref{eqn:S}}{=} \text{Ric}(\mm,\mb) = \frac{1}{2}\big(\text{Ric}(\xx,\xx) + \text{Ric}(\yy,\yy)\big),
$$
it follows that $\text{Ric}(X,X) = \text{Ric}(Y,Y) = \frac{S}{2} - \frac{\omega^2}{4}$.  
Finally, by \eqref{eqn:S3} we get $\text{Ric}(T,\mm)=\mm(\rho)$, whose real and imaginary parts yield
\beqa
\label{eqn:RR1}
\text{Ric}(T,X)=-\frac{Y(\omega)}{2} \comma \text{Ric}(T,Y)=\frac{Y(\omega)}{2}\cdot
\eeqa

Thus the Ricci operator $\text{Ric}\colon TM \lra TM$, defined by
\beqa
\label{eqn:Ricop}
v \mapsto \text{Ric}(v) = R(v,T)T + R(v,X)X + R(v,Y)Y,
\eeqa
has, with respect to the frame $\{T,X,Y\}$, the matrix
\beqa
\label{eqn:Ham1}
\text{Ric} = \frac{1}{2}\begin{pmatrix}
\omega^2 & -Y(\omega) & X(\omega)\\
-Y(\omega) & S - \frac{\omega^2}{2} & 0\\
X(\omega) & 0 & S - \frac{\omega^2}{2}
\end{pmatrix}\cdot
\eeqa
The characteristic polynomial of $\text{Ric}$ is
$$
\label{eqn:charpol}
\left(\frac{S}{2}-\frac{\omega^2}{4}-\lambda\right)\left(\lambda^2-\left(\frac{S}{2}+\frac{\omega^2}{4}\right)\lambda+\frac{\omega^2}{2}\left(\frac{S}{2}-\frac{\omega^2}{4}\right)-\frac{1}{4}|\nabla\omega|_g^2\right),
$$
where $\nabla \omega = X(\omega)X+Y(\omega)Y$ is the gradient of $\omega$; the eigenvalues of $\text{Ric}$ are then easily found to be
\beqa \label{eqn:eigenval}
\lambda_1 = \frac{S}{4}+\frac{\omega^2}{8} + \frac{\sqrt{\Delta}}{2} \comma \lambda_2 = \frac{S}{4}+\frac{\omega^2}{8} - \frac{\sqrt{\Delta}}{2} \comma \lambda_3=\frac{S}{2}-\frac{\omega^2}{4},
\eeqa
where $\Delta \defeq \frac{1}{4}(S-\frac{3}{2}\omega^2)^2+|\nabla\omega|_g^2$.  Note that when the twist is constant, \eqref{eqn:Ham1} reduces to
\beqa
\label{eqn:Ham0}
\text{Ric} = \begin{pmatrix}
\frac{\omega^2}{2} & 0 & 0\\
0 & \frac{S}{2} - \frac{\omega^2}{4} & 0\\
0 & 0 & \frac{S}{2} - \frac{\omega^2}{4}
\end{pmatrix}\cdot
\eeqa
 As mentioned in the Introduction, the canonical such example is $(\mathbb{S}^3,\mathring{g})$ with radius $R$ and Hopf Killing vector field $T$:
\beqa
\label{eqn:S3K}
\text{Ric}(T,T)  = \frac{2}{R^2} = \frac{\omega^2}{2} \comma S = \frac{6}{R^2}  \imp \frac{S}{2} - \frac{\omega^2}{4} = \frac{\omega^2}{2}\cdot
\eeqa
In any case, owing to Hamilton's well known result regarding the positivity of the Ricci operator in dimension 3 \cite{Hamilton}, we have the following global obstruction:

\begin{customthm}{2}
\label{thm:2}
Let $(M,g)$ be a compact Riemannian 3-manifold and $T$ a globally defined, unit length Killing vector field.  If $\emph{\text{Ric}}(T,T) \neq 0$ and
\beqa
\label{eqn:pos}
S > 2\frac{|\emph{\text{Ric}}(T)|_g^2}{\emph{\text{Ric}}(T,T)}-\emph{\text{Ric}}(T,T),
\eeqa
then $M$ admits a metric of constant positive sectional curvature.
\end{customthm}

\begin{proof}
Observe that the eigenvalues of $\text{Ric}$ in \eqref{eqn:eigenval} are all positive when 
\beqa
\label{eqn:Sprelim}
S>\frac{|\nabla\omega|_g^2}{\omega^2}+\frac{\omega^2}{2}\cdot
\eeqa
Because
\beqa
\text{Ric}(T) \!\!&=&\!\! R(T,T)T + R(T,X)X + R(T,Y)Y\nonumber\\
\!\!&\overset{\eqref{eqn:RR1}}{=}&\!\! \frac{\omega^2}{2}T  -  \frac{Y(\omega)}{2}X + \frac{X(\omega)}{2}Y,\label{eqn:Ric22}
\eeqa
it follows that
$$
|\text{Ric}(T)|_g^2 = \frac{1}{4}(\omega^4 + |\nabla\omega|_g^2).
$$
Since $\text{Ric}(T,T)=\frac{\omega^2}{2}$, \eqref{eqn:pos} implies \eqref{eqn:Sprelim}.  By \cite{Hamilton}, positive Ricci operator implies that $M$ admits a metric of constant positive sectional curvature.
\end{proof}
Note that the positive sectional curvature condition in \cite{Hamilton} requires that no eigenvalue of $\text{Ric}$ should be larger than the sum of the other two eigenvalues. This requires $S$ twice as large: $S>2\frac{|\nabla\omega|_g^2}{\omega^2}+\omega^2$.

\section{Criterion for Conformal Flatness}
A metric on a 3-manifold is locally conformally flat if and only if its Cotton-York tensor vanishes; since this 2-tensor is symmetric and trace-free, this gives five conditions.  The Cotton-York tensor is calculated in Appendix \ref{app:CY}, where it is written in matrix form as $\begin{pmatrix} c_1 & c_2 & c_3\end{pmatrix}$ with respect to a local orthonormal frame $\{T,X,Y\}$ satisfying \eqref{eqn:coord1} in the coordinates $(t,r,\theta)$; see \eqref{eqn:Y1}-\eqref{eqn:Y3} below.  In what follows, the entry in the $i^{\text{th}}$ column and $j^{\text{th}}$ row is denoted by $c_{ij}$.  With that said, we now proceed to the proof of Theorem \ref{thm:3}:

\begin{customthm}{3}
\label{thm:3}
Let $(M,g)$ be a Riemannian 3-manifold that admits a unit length Killing vector field $T$.  If $g$ is locally conformally flat, then
\beqa
\label{eqn:wPDE2}
4|\emph{\text{Ric}}(T)|_g^2 = 3\emph{\text{Ric}}(T,T)^2 - 2B\emph{\text{Ric}}(T,T) + C
\eeqa
for some constants $B,C$, where $\emph{\text{Ric}}(T)$ is the Ricci operator.
Conversely, given \eqref{eqn:wPDE2}, there exist coordinates $(r,\theta)$ on the quotient metric in \eqref{eqn:gT0} with respect to which $g$ is conformally flat when 
\[ \omega_\theta=0 \comma \omega_r^2 + \frac{1}{4}(\omega^2 + 2B)^2= C+B^2 \comma \varphi=h(\theta)\omega_r, \]
where $\omega^2  = 2\emph{\text{Ric}}(T,T)$, $\varphi$ is as in Theorem \ref{thm:main}, and $h(\theta)$ is a smooth function.  If $\emph{\text{Ric}}(T,T)$ is constant, then  $g$ is locally conformally flat if and only if $S = 3\emph{\text{Ric}}(T,T)$.
\end{customthm}

\begin{proof}
We start by setting the entry $c_{32}$ equal to zero,
\[ \frac{1}{2}Y(X(\omega)) = \frac{1}{2}\frac{\partial}{\partial r}\left(\frac{1}{\varphi}\frac{\partial\omega}{\partial\theta}\right)=0\]
(recall from Lemma \ref{lemma:KVF2} that $\partial_t\omega  = T(\omega) = 0$), which implies that 
\begin{equation}\label{omeq}
\omega_\theta = A(\theta)\varphi
\end{equation} for some function $A(\theta)$. Next, $c_{21} = c_{31} = 0$ together yield
\begin{equation}
\label{Seq}S=\frac{5}{2}\omega^2+B_1 = 5\text{Ric}(T,T) + B_1
\end{equation} for some constant $B_1$.
It follows that $\frac{1}{2}\big(S+\text{Ric}(T,T)\big)=-\frac{\varphi_{rr}}{\varphi} = \frac{B_1}{2}+\frac{3}{2}\omega^2$.
The remaining two conditions are $c_{22} = c_{33} = 0$.
Substituting \eqref{omeq} and \eqref{Seq} into $c_{33}=0$, and recalling \eqref{eqn:cr}, gives
\[ \omega(\omega^2+B_1)=-2\frac{\omega_r \varphi_r + A'(\theta)}{\varphi}, \]
which, after rearranging, becomes
\begin{equation}
\label{y33}
\varphi\,\omega(\omega^2+B_1)+2\omega_r \varphi_r= -2A'(\theta).
\end{equation}
Finally, from $c_{22} = 0$ we get
\begin{equation*}\label{y22} \omega(\omega^2+B_1)=-2\omega_{rr},\end{equation*}
and after multiplying through by $-\omega_r$ and integrating yields
\beqa
\label{eqn:f}f(\theta) = \omega^4 + 2B_1\omega^2 + 4\omega_r^2
\eeqa
for some function $f(\theta)$. To relate $f(\theta)$ and $A(\theta)$, take a $\theta$-derivative of $f$,
\begin{align*}f'(\theta)&=4\omega^3\omega_\theta+4B_1\omega\omega_\theta+4\omega_r\omega_{r\theta},\\
&\overset{\eqref{omeq}}{=}4\omega^3A(\theta)\varphi+4B_1\omega A(\theta)\varphi+8\omega_r A(\theta)\varphi_r,\\
&=4A(\theta)\big(\varphi\omega(\omega^2+B_1)+2\omega_r \varphi_r\big),\\
&\overset{\eqref{y33}}{=}-8A(\theta)A'(\theta),
\end{align*}
and integrate, to obtain
\[ f(\theta)=-4A^2(\theta)+4C\]
for some constant $C$. Inserting this back into \eqref{eqn:f} gives
$$
4\omega_r^2 = -4A^2(\theta) + 4C + B_1^2 - (\omega^2+B_1)^2.
$$
Substituting \eqref{omeq} for $A(\theta)$, dividing through by $4$, and setting $B \defeq B_1/2$, yields
\beqa
\label{eqn:prelim1}
\omega_r^2 + \frac{\omega_{\theta}^2}{\varphi^2} = C - \frac{\omega^4}{4} - B\omega^2.
\eeqa
The left-hand side can be further simplified; indeed, since $\text{Ric}(T,T) = \frac{\omega^2}{2}$,
\beqa
\text{Ric}(T)
\!\!&\overset{\eqref{eqn:coord1}}{=}&\!\! \frac{\omega^2}{2}T - \frac{\omega_r}{2}X + \frac{\omega_\theta}{2\varphi}Y\nonumber
\eeqa
(recall \eqref{eqn:Ric22}), so that
$$
|\text{Ric}(T)|_g^2 = \frac{\omega^4}{4} + \frac{\omega_r^2}{4} + \frac{\omega_\theta^2}{4\varphi^2}\cdot
$$
Substituting this into \eqref{eqn:prelim1} yields
$$
4|\text{Ric}(T)|_g^2 = 3\text{Ric}(T,T)^2 - 2B\text{Ric}(T,T) + C,
$$ 
which is precisely \eqref{eqn:wPDE2}.  Conversely, suppose that \eqref{eqn:wPDE2} holds; then \eqref{eqn:prelim1} holds and we see that $|d\omega|_g^2=C-\frac{\omega^4}{4}-B\omega^2=|\nabla\omega|_g^2$.  Next, observe that the vector field
$$
\widetilde{X}\defeq2\text{Ric}(T)-\omega^2T=X(\omega)Y-Y(\omega)X
$$
is divergence-free and satisfies both $|\widetilde{X}|_g=|\nabla\omega|_g$ and $g(\widetilde{X}, \nabla\omega)=0$, in which case its normalization will also be divergence-free:
\[ \text{div}\bigg(\frac{\widetilde{X}}{|\widetilde{X}|_g}\bigg) = -\frac{g(\nabla|\widetilde{X}|_g,\widetilde{X})}{|\widetilde{X}|_g^2}=0. \]
Then, setting $\widetilde{Y}\defeq\frac{\nabla\omega}{|\nabla\omega|_g}$ gives an orthonormal frame $\{T,\widetilde{X},\widetilde{Y}\}$ satisfying \eqref{eqn:guage}.
Working in this frame, set $X\defeq\widetilde{X}$, $Y\defeq\widetilde{Y}$ and adjust the coordinates $r,\theta$ accordingly. Then in these new
coordinates $\omega_\theta=0$ and \eqref{eqn:prelim1} becomes the following ODE:
\[ \omega_r^2 + \frac{1}{4}(\omega^2 + 2B)^2= C+B^2. \]
This has the form of a conservation of energy equation with positive potential.
The potential is a single well when $B\ge0$ and a double well when $B<0$.
Thus there will be periodic solutions for generic constants $B$, $C$ and initial value $\omega|_{r=0}=\omega_0$ satisfying $C+B^2\ge\frac{1}{4}(\omega_0^2+2B)^2$.  This is not enough to guarantee conformal flatness, as \eqref{Seq} and \eqref{y33} must also be satisfied.
In light of \eqref{eqn:gauss}, we now show that these require that $\varphi=h(\theta)\omega_r$ for some function $h(\theta)$. Indeed, taking an $r$ derivative of \eqref{eqn:prelim1} yields
\[ 2\omega_{rr}\omega_{r}=-\omega(\omega^2+B_1)\omega_r. \]
Since \eqref{omeq} implies that $A$ is zero, the above implies that \eqref{y33} can be written as
\[-2\omega_{rr}\varphi+2\omega_r\varphi_r=0.\]
This requires $\omega$ constant or $\varphi=h(\theta)\omega_r$ for some function $h(\theta)$.
Taking two $r$ derivatives yields
\beqa
\varphi_{rr} \!\!&=&\!\! h(\theta)\omega_{rrr}\nonumber \\
\!\!&=&\!\! h(\theta)\left(-\frac{3}{2}\omega^2-\frac{B_1}{2}\right)\omega_r\nonumber 
\eeqa
Now using \eqref{eqn:gauss}, this gives \eqref{Seq}, showing that for this choice of $\phi$, the metric $g$ is conformally flat.  Finally, as \eqref{eqn:CFconst} in Appendix \ref{app:CY} shows, if $\text{Ric}(T,T)$ is constant, then the Cotton-York tensor vanishes if and only if the scalar curvature satisfies $S = 3\text{Ric}(T,T)$.
\end{proof}

\begin{appendices}
\section{Derivation of the Cotton-York Tensor}
\label{app:CY}
In order to compute the Cotton tensor, we will use the Cartan formalism (see, e.g., \cite[pp.~111-12]{PP}).  First recall the \emph{Schouten tensor} $P$: 
\[ P \defeq \mathrm{Ric}-\frac{S}{4}g. \]
Using the fact that $P\in\Omega^1(TM)$, the Cotton tensor is given by 
\[ \mathrm{Cot}(U,V) \defeq d^\nabla P(U,V) = (\nabla_U P)(V)-(\nabla_V P)(U). \]
We now work in a frame $\{T,X,Y\}$ satisfying \eqref{eqn:dgauge}; with respect to it, the Levi-Civita connection form $\boldsymbol{\omega}$ (not to be  confused with the twist function $\omega$) is 
\[ \boldsymbol{\omega} = \begin{pmatrix}
0 & -\frac{\omega}{2}Y^\flat & \frac{\omega}{2}X^\flat \\
\frac{\omega}{2}Y^\flat & 0 & \frac{\omega}{2}T^\flat+\mathrm{div}(Y)X^\flat \\
-\frac{\omega}{2}X^\flat & -\frac{\omega}{2}T^\flat-\mathrm{div}(Y)X^\flat & 0\\
\end{pmatrix}\cdot \]
Together with the Ricci tensor \eqref{eqn:Ham1}, the Schouten tensor is thus
\[ P = \frac{1}{2}\begin{pmatrix}
\omega^2-\frac{S}{2} & -Y(\omega) & X(\omega)\\
-Y(\omega) & \frac{S}{2} - \frac{\omega^2}{2} & 0\\
X(\omega) & 0 & \frac{S}{2} - \frac{\omega^2}{2}
\end{pmatrix}\cdot
\]
The frame formula for the exterior covariant derivative of $P$ is just
\[ d^\nabla P = dP + [\boldsymbol{\omega},P].\]
Using this, and recalling that $S$ and $\omega$ have zero derivative in the direction of $T$, we have
\begin{align*}
\mathrm{Cot}(X,Y)&=(X(P)+[\boldsymbol{\omega}(X),P])\left(\begin{array}{c} 0\\0\\1 \end{array}\right)-(Y(P)+[\boldsymbol{\omega}(Y),P])\left(\begin{array}{c} 0\\1\\0 \end{array}\right) \\
&= \left(\begin{array}{c} - \frac{3}{4}\omega^3+\frac{1}{2}S\omega+\frac{1}{2}\mathrm{div}(Y)Y(\omega)+\frac{1}{2}[X(X(\omega))+Y(Y(\omega))] \\ -\frac{1}{4}Y(S)+\frac{5}{4}\omega Y(\omega) \\ \frac{1}{4}X(S)-\frac{5}{4}\omega X(\omega)\end{array}\right),\\
\mathrm{Cot}(Y,T)&=(Y(P)+[\boldsymbol{\omega}(Y),P])\left(\begin{array}{c} 1\\0\\0 \end{array}\right)-[\boldsymbol{\omega}(T),P]\left(\begin{array}{c} 0\\0\\1 \end{array}\right) \\
&= \left(\begin{array}{c} \frac{5}{4}\omega Y(\omega)-\frac{1}{4}Y(S) \\ \frac{3}{8}\omega^3-\frac{1}{4}S\omega-\frac{1}{2}Y(Y(\omega)) \\ \frac{1}{2}Y(X(\omega))\end{array}\right),\\
\mathrm{Cot}(T,X)&=[\boldsymbol{\omega}(T),P]\left(\begin{array}{c} 0\\1\\0 \end{array}\right)-(X(P)+[\boldsymbol{\omega}(X),P])\left(\begin{array}{c} 1\\0\\0 \end{array}\right) \\
&= \left(\begin{array}{c} \frac{1}{4}X(S)-\frac{5}{4}\omega  X(\omega) \\ \frac{1}{2}X(Y(\omega)) -\frac{1}{2}X(\omega)\mathrm{div}(Y) \\ \frac{3}{8}\omega^3-\frac{1}{4}S\omega-\frac{1}{2}Y(\omega)\mathrm{div}(Y)-\frac{1}{2}X(X(\omega))\end{array}\right)\cdot
\end{align*}
The Cotton-York tensor is the Hodge-star of the Cotton tensor:
\begin{align*}
CY &\defeq \star \mathrm{Cot} .
\end{align*}
Written as $CY=\left(\begin{array}{ccc} c_1 & c_2 & c_3 \end{array}\right)$ with columns $c_1,c_2,c_3$, it is given by
\begin{align}
c_1&=\left(\begin{array}{c} - \frac{3}{4}\omega^3+\frac{1}{2}S\omega+\frac{1}{2}\mathrm{div}(Y)Y(\omega)+\frac{1}{2}[X(X(\omega))+Y(Y(\omega))] \\ -\frac{1}{4}Y(S)+\frac{5}{4}\omega Y(\omega) \\ \frac{1}{4}X(S)-\frac{5}{4}\omega X(\omega)\end{array}\right),\label{eqn:Y1} \\
c_2&=\left(\begin{array}{c} \frac{5}{4}\omega Y(\omega)-\frac{1}{4}Y(S) \\ \frac{3}{8}\omega^3-\frac{1}{4}S\omega-\frac{1}{2}Y(Y(\omega)) \\ \frac{1}{2}Y(X(\omega))\end{array}\right),\label{eqn:Y2} \\
c_3&= \left(\begin{array}{c} \frac{1}{4}X(S)-\frac{5}{4}\omega X(\omega) \\ \frac{1}{2}Y(X(\omega)) \\ \frac{3}{8}\omega^3-\frac{1}{4}S\omega-\frac{1}{2}Y(\omega)\mathrm{div}(Y)-\frac{1}{2}X(X(\omega))\end{array}\right)\cdot\label{eqn:Y3}
\end{align}
Observe that if $\text{Ric}(T,T) = \omega^2/2$ is constant, then the Cotton-York tensor simplifies to
\begin{align*}
c_1&=\left(\begin{array}{c} - \frac{3}{4}\omega^3+\frac{1}{2}S\omega \\ -\frac{1}{4}Y(S)\\ \frac{1}{4}X(S)\end{array}\right), \\
c_2&=\left(\begin{array}{c} -\frac{1}{4}Y(S) \\ \frac{3}{8}\omega^3-\frac{1}{4}S\omega \\ 0\end{array}\right), \\
c_3&= \left(\begin{array}{c} \frac{1}{4}X(S) \\ 0 \\ \frac{3}{8}\omega^3-\frac{1}{4}S\omega\end{array}\right),
\end{align*}
in which case $CY = 0$ if and only if
\beqa
\label{eqn:CFconst}
S = 3\text{Ric}(T,T).
\eeqa
Of related interest is the case when $CY$ equals the traceless Ricci tensor, 
\beqa
\label{eqn:TMG}
CY = \text{Ric} - \frac{1}{3}Sg,
\eeqa
specifically the case when the scalar curvature $S$ is \emph{constant}; see, e.g., \cite{NC}, where this equality is related to so-called \emph{topological massive gravity} in dimension 3, and where $S = 6\Lambda$ with $\Lambda$ the cosmological constant.   We mention here in passing that in the presence of a unit length Killing vector field $T$, the condition \eqref{eqn:TMG} with $S$ constant implies that $\text{Ric}(T,T) = \frac{\omega^2}{2}$ is also constant.  Indeed,
\beqa
\frac{5}{4}\omega Y(\omega) \!\!\!&\overset{\eqref{eqn:Y1}}{=}&\!\!\! c_{12} \overset{\eqref{eqn:TMG}}{=} \text{Ric}(T,X) \overset{\eqref{eqn:S3}}{=} -\frac{Y(\omega)}{2},\nonumber\\
-\frac{5}{4}\omega X(\omega) \!\!\!&\overset{\eqref{eqn:Y1}}{=}&\!\!\! c_{13} \overset{\eqref{eqn:TMG}}{=} \text{Ric}(T,Y) \overset{\eqref{eqn:S3}}{=} \frac{X(\omega)}{2},\nonumber\\
\frac{1}{2}Y(X(\omega)) \!\!\!&\overset{\eqref{eqn:Y3}}{=}&\!\!\! c_{32} \overset{\eqref{eqn:TMG}}{=} \text{Ric}(X,Y) \overset{\eqref{eqn:S2}}{=} 0,\nonumber
\eeqa
together imply that  $\omega$ is constant, as can  be easily verified.

\end{appendices}
\bibliographystyle{alpha}
\bibliography{constant_KVF}
\end{document}